\theoremstyle{thmstyleone}%
\newtheorem{theorem}{Theorem}
\theoremstyle{thmstyletwo}%
\newtheorem{remark}{Remark}%
\theoremstyle{thmstylethree}%
\newtheorem{assumption}{Assumption}
\newtheorem{lemma}{Lemma}
\let\mr=\mathrm
\begin{document}
\begin{frontmatter}

\title{Supercloseness analysis of the nonsymmetric interior penalty Galerkin  method  on Bakhvalov-type mesh\tnoteref{funding} }

\tnotetext[funding]{
This research is supported by National Natural Science Foundation of China (11771257, 11601251), Shandong Provincial Natural Science Foundation, China (ZR2021MA004).
}

\author[label1] {Xiaoqi Ma \fnref{cor1}}
\author[label1] {Jin Zhang \corref{cor2}}
\cortext[cor2] {Corresponding email: jinzhangalex@sdnu.edu.cn }
\fntext[cor1] {Email: xiaoqiMa@hotmail.com }
\address[label1]{School of Mathematics and Statistics, Shandong Normal University, Jinan 250014, China}

\begin{abstract}
In this paper, we study the convergence of the nonsymmetric interior penalty Galerkin (NIPG) method on a Bakhvalov-type mesh for the first time. For this purpose, a new composite interpolation is designed, which solves the inherent difficulty of analysis on Bakhvalov-type meshes. More specifically, Gau{\ss} Radau interpolation and Gau{\ss} Lobatto interpolation are used outside and inside the layer, respectively. On the basis of that, by choosing the specific values of the penalty parameters at different mesh points, we derive the supercloseness of $k+\frac{1}{2}$th order ($k\ge 1$), and prove the convergence of optimal order in an energy norm. The theoretical conclusion is consistent with the numerical results.
\end{abstract}

\begin{keyword}
Convection diffusion, Singular perturbation, NIPG method, Bakhvalov-type mesh, Supercloseness
\end{keyword}
\end{frontmatter}

\section{Introduction}
In recent years, with the wide application of singularly perturbated problems in practical life, relevant numerical methods have attracted the attention of more and more researchers, see \cite{Gov1Moh2:2022-motified, Moh1Nat2:2010-motified, Moh1Red2:2015-E, Zha1Lv2:2021-H, Zha1Lv2:2022-S, Sah1Moh2:2019-P, Fal1Heg2Mil3:2000-R, Kad1Gup2:2010-A, Mad1Sty2:2003-motified,  Zha1Liu2:2020-O} and their references. It is worth noting that the exact solutions of this kind of problem usually change sharply locally, resulting boundary layers or interior layers. 
In order to better resolve these layers, researchers designed a simple and effective mesh strategy--layer adapted meshes, whose most representative ones are Bakhvalov-type meshes \cite{Bak1:1969-motified} and Shishkin meshes \cite{Shi1:1990-G}. In the numerical experiment of \cite{Lin1Sty2:2001-N}, we find that even if we use the standard Galerkin method on a layer adapted mesh, there still exist small oscillations. Therefore, it is necessary to consider strong stable numerical methods on layer adapted meshes, such as continuous interior penalty stabilization, the streamline diffusion finite element method, local projection stabilization and discontinuous Galerkin method, see literature \cite{Adj1Kia2:2005-S, Styn1Styn2:2018-Convection-diffusion, Fal1Heg2Mil3:2000-R, Roo1Sty2Tob3:2008-R, Hou1Sch2:2002-D, Sty1Tob2:2003-mptified} for more details.

Since the 1970s, the nonsymmetric interior penalty Galerkin (NIPG) method has gradually become a popular stabilization technique. Because this method applies an interior penalty term to restrain the discontinuity across element boundaries, it has flexibility and advantages that the traditional finite element method does not have. In addition, compared with the incomplete interior penalty Galerkin method and the symmetrical interior penalty Galerkin method, a prominent advantage of the NIPG method is that it has strong stability and has no strict restrictions on the value of penalty parameters. With the continuous research of scholars, the convergence analysis of NIPG method is not uncommon. 
For example, Roos and Zarin analyzed the convergence of a two-dimensional convection diffusion problem of NIPG method on a Shishkin mesh with bilinear elements in \cite{Zar1Roo2:2005-I}. Then, Zarin used the NIPG method on a Shishkin-type mesh, and derived the convergence of almost $k$ order \cite{Roo1Zar2:2007-motified}. In \cite{Zhu1Tan2Yin3:2015-H}, Zhu et al. applied the NIPG method on a Shishkin-type mesh, and proved the method is uniformly convergent in an energy norm. It can be seen that the analysis in the above work only focuses on Shishkin meshes, and the convergence analysis of NIPG method on Bakhvalov-type meshes has not been established.
This is because the convergence of the convection term cannot be analyzed on the element in the vicinity of the transition point near the layer. To deal with this difficulty, we propose a new interpolation, which lays a foundation for the analysis of the NIPG method in two-dimensional case.

In this paper, for a one-dimensional singularly perturbed problem, the supercloseness analysis of NIPG method is studied on a Bakhvalov-type mesh for the first time. For the sake of the desired results, we use Gau{\ss} Radau interpolation outside the layer, and Gau{\ss} Lobatto interpolation inside the layer. Note that the division inside and outside the layer needs to be based on the characteristics of mesh and analysis. Then we derive the penalty parameters at different element boundaries and the supercloseness of $k+\frac{1}{2}$ order. 

The rest of the paper is organized as follows. Firstly, we describe a continuous problem and provide some basic assumptions in Section 2. In addition, the NIPG method on Bakhvalov-type mesh is introduced. Then in Section 3 a new interpolation is defined, further, the corresponding interpolation error estimate is obtained. In Section 4, the uniform supercloseness related to perturbation parameter is presented. At last, we provide some numerical results to verify the main conclusion.

Throughout the paper, let $C$ be a general positive constant, which is independence of the perturbation parameter $\varepsilon$ and the mesh parameter $N$. Furthermore, assume that $k$ is a fixed integer and satisfy the condition $k\ge 1$.
\section{Continuous problem}\label{sec:mesh,method}
We consider the following singularly perturbed problem:
\begin{equation}\label{eq:S-1}
\begin{aligned}
& Lu:=-\varepsilon u''(x)+b(x)u'(x)+c(x)u(x)=f(x) \quad \text{$x \in \Omega:= (0,1)$},\\
& u(0)= u(1)=0,
\end{aligned}
\end{equation}
where $0<\varepsilon \ll 1$, and $b(x)$ is the convection coefficient  satisfying $b(x)\ge\alpha>0$ on $\bar{\Omega}$. Furthermore, for some fixed constant $\gamma$, assume that
\begin{equation}\label{eq:SPP-condition-1}
 c(x)-\frac{1}{2}b'(x)\ge \gamma>0, \quad\forall x\in\bar{\Omega}.
\end{equation}
Here $b$, $c$ and $f$ are sufficiently smooth. Due to $\varepsilon$ can be arbitrarily small, the exact solution $u$ of \eqref{eq:S-1}  typically features a boundary layer near $x= 1$, whose width is $\mathcal{O}(\varepsilon \ln (1/\varepsilon) )$.

Below, we introduce \emph{a priori} information of the solution, which is the basis of our analysis.
\begin{theorem}\label{eq:AASS}
Suppose that $q$ is a positive integer. Assume that \eqref{eq:SPP-condition-1} holds true and $b$, $c$, $f$ are sufficiently smooth. Then the solution $u$ of \eqref{eq:S-1} can be decomposed into $u = S + E$, where the smooth component $S$ and the layer component $E$ satisfy $LS=f$ and $LE=0$, separately. Then for $0\le l\le q$
\begin{equation}\label{eq:decomposition}
\begin{aligned}
&\vert S^{(l)}(x)\vert\le C,\quad
&\vert E^{(l)}(x)\vert\le C\varepsilon^{-l}e^{-\alpha(1-x)/\varepsilon}.
\end{aligned}
\end{equation}
In particular, when $b, c, f\in C^{\infty}(\Omega)$, \eqref{eq:decomposition} holds for any $q\in \mathbb{N}$.
\end{theorem}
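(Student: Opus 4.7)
The plan is to construct the decomposition through a truncated asymptotic expansion in $\varepsilon$ combined with barrier-function estimates for the layer remainder. First, I would build a candidate smooth part $\tilde S = \sum_{i=0}^{q}\varepsilon^{i} S_i$, where $S_0$ solves the reduced first-order equation $bS_0' + cS_0 = f$ with the inflow condition $S_0(0)=0$, and each subsequent $S_i$ solves $bS_i' + cS_i = S_{i-1}''$ with $S_i(0)=0$. Because $b\ge\alpha>0$ and $b,c,f$ are sufficiently smooth, standard first-order ODE theory yields $\|S_i^{(l)}\|_\infty \le C$ uniformly in $\varepsilon$. A direct substitution shows that $L\tilde S = f - \varepsilon^{q+1}S_q''$, so $\tilde S$ fails to solve $Lu=f$ by only an $\mathcal{O}(\varepsilon^{q+1})$ residual.

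Next, I would define $S$ as the exact solution of $LS=f$ with $S(0)=0$ and $S(1)=\tilde S(1)$. Because the Dirichlet data at the outflow boundary already equals the outer expansion value, $S$ itself carries no boundary layer. Quantitatively, $S-\tilde S$ satisfies $L(S-\tilde S)=\varepsilon^{q+1}S_q''$ with zero boundary values, and comparison against the constant barrier $C\varepsilon^{q+1}/\gamma$ (using \eqref{eq:SPP-condition-1} and the maximum principle) yields $\|S-\tilde S\|_\infty\le C\varepsilon^{q+1}$. Combined with the uniform bounds on $S_i^{(l)}$, this delivers $|S^{(l)}(x)|\le C$ for $0\le l\le q$.

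Then I would set $E:=u-S$. By linearity $LE=0$, $E(0)=0$, and $E(1)=u(1)-S(1)=-\tilde S(1)$, which is bounded. Using $\Phi(x)=Ce^{-\alpha(1-x)/\varepsilon}$ as a barrier and checking that $L\Phi\ge\gamma\Phi>0$ for a suitable $C$, the maximum principle gives $|E(x)|\le Ce^{-\alpha(1-x)/\varepsilon}$. Bounds on higher derivatives then follow by differentiating $LE=0$: each derivative $E^{(l)}$ satisfies an ODE of the same convection-dominated form whose right-hand side involves the already-controlled lower derivatives, and solving by variation of constants introduces one extra factor of $1/\varepsilon$ per differentiation, producing $|E^{(l)}(x)|\le C\varepsilon^{-l}e^{-\alpha(1-x)/\varepsilon}$. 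When $b,c,f\in C^{\infty}$, the procedure can be iterated for any $q\in\mathbb{N}$.

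I expect the principal obstacle to be the sharp $\varepsilon^{-l}$ scaling for the layer derivatives: the barrier argument hands over $E$ itself cheaply, but the propagation of the exponential weight together with the correct negative power of $\varepsilon$ requires a careful bootstrap on the ODE satisfied by successive derivatives of $E$, together with matching barrier functions of the form $C\varepsilon^{-l}e^{-\alpha(1-x)/\varepsilon}$ that must be verified to dominate the lower-order inhomogeneity produced by $-\varepsilon(E^{(l)})''+bE^{(l)}+\cdots$. The necessary smoothness of $b,c,f$ must also be tracked, since each step of the induction and each term $S_i$ consumes additional derivatives of the data.
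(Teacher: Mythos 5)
The paper itself gives no argument here: it simply cites Farrell--Hegarty--Miller--O'Riordan--Shishkin for the decomposition, and your outline is essentially the classical proof from that literature (outer asymptotic expansion plus barrier/comparison estimates for the layer remainder). So the route is the right one in spirit, but as written it has two genuine gaps. The more serious one is the passage from $\Vert S-\tilde S\Vert_{\infty}\le C\varepsilon^{q+1}$ to $\vert S^{(l)}(x)\vert\le C$: sup-norm closeness gives no control whatsoever on derivatives. The remainder $w=S-\tilde S$ solves the full singularly perturbed problem $Lw=\varepsilon^{q+1}S_q''$ with $w(0)=w(1)=0$, and in general it does contain a layer, so the best one can hope for is $\vert w^{(l)}(x)\vert\le C\varepsilon^{q+1}\bigl(1+\varepsilon^{-l}e^{-\alpha(1-x)/\varepsilon}\bigr)$, which is indeed $\le C\varepsilon^{q+1-l}\le C$ for $l\le q$ --- but establishing that bound requires Kellogg--Tsan-type a priori derivative estimates for solutions of $Lv=g$, i.e. essentially the same strength of result as the theorem you are proving, applied to $R=\varepsilon^{-(q+1)}(S-\tilde S)$. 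Your claim that $S$ ``carries no boundary layer'' because its outflow datum matches $\tilde S(1)$ is only heuristic; the real work of the standard proof is precisely this estimate of the remainder in $C^{q}$ with the correct $\varepsilon$-weights, and it is missing.

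The second gap is the use of the maximum principle. Condition \eqref{eq:SPP-condition-1} does not imply $c\ge 0$, so the operator $L$ of \eqref{eq:S-1} need not admit the comparison arguments you invoke: the constant barrier $C\varepsilon^{q+1}/\gamma$ requires $c\ge c_0>0$, and for $\Phi(x)=Ce^{-\alpha(1-x)/\varepsilon}$ one gets $L\Phi=\bigl(\alpha(b-\alpha)/\varepsilon+c\bigr)\Phi$, which can be negative at points where $b=\alpha$ and $c<0$; your asserted inequality $L\Phi\ge\gamma\Phi$ does not follow from \eqref{eq:SPP-condition-1}. The standard fix --- a preliminary transformation $u=e^{\lambda x}v$ (or the remark that one may assume $c\ge 0$ without loss of generality for $\varepsilon$ small) --- should be stated before any barrier argument is run. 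The remaining item, the $\varepsilon^{-l}$ bootstrap for $E^{(l)}$, you have correctly identified as delicate; in one dimension it can be carried out by integrating $LE=0$ and using variation of constants, so that part is a sketch rather than a gap, but the two points above must be repaired for the proof to stand.
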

\begin{proof}
From \cite{Fal1Heg2Mil3:2000-R}, this conclusion can be obtained directly.
\end{proof}

\subsection{Bakhvalov-type mesh}
First, we suppose that the mesh points $\Omega_{N}=\{x_{j}\in \Omega : j=0, 1, 2,\cdots, N\}$ and a partition of $\Omega$
\begin{equation*}
\mathcal{T}_{N} =\{{I_{j} = [x_{j-1}, x_{j}]: j = 1, 2, \cdots, N}\}.
\end{equation*}
Then assume that $h_{j}=x_{j}-x_{j-1}$ is denoted as the length of $I_{j}$, while $I$ represents the general interval.

In order to better characterize the change of solution in the region $\Omega$, we adopt a Bakhvalov-type mesh, the specific description is as follows: The domain $\bar{\Omega}$ is divided as $\bar{\Omega}= [0, \tau]\cup [\tau, 1]$, where the transition point $\tau=1+\frac{\sigma \varepsilon}{\alpha} \ln \varepsilon$ satisfies $\tau \ge 1/2$ and $\sigma\ge k+1$. Let $N\in \mathbb{N}$ be an integer divisible by $2$ and $N\ge 4$. Each subdomain contains $N/2$ mesh points. Therefore, the mesh generating function is defined as
\begin{equation}\label{eq:Bakhvalov-type mesh-Roos}
x= \psi(t)=
\left\{
\begin{aligned}
&1+\frac{\sigma\varepsilon}{\alpha}\ln [1+2(1-\varepsilon)(t-1)],\quad t\in [\frac{1}{2}, 1]\\
&2\tau t,\quad t\in [0, \frac{1}{2}).
\end{aligned}
\right.
\end{equation}
Obviously, $x_{N/2}=\tau$ can be obtained. 

\begin{assumption}\label{ass:S-1}
In this paper, we will make an assumption that
\begin{equation*}
\varepsilon \le C N^{-1},
\end{equation*}
as is not a restriction in practice.
\end{assumption}
When $\varepsilon \ge CN^{-1}$, the layer in the exact solution of \eqref{eq:S-1} is weak, and it can be well resolved by uniform meshes. And the relevant theoretical analysis can be covered by the usual discontinuous finite element theory \cite{Di1Dan2Ern3-2012:M}. 
\subsection{The NIPG method}
In the following, we present some basic notions. Let $m$ be a nonnegative integer and for $I\in \mathcal{T}_{N}$, the space of order $m$ is denoted as 
\begin{equation*}
H^{m}(\Omega, \mathcal{T}_{N}) = \{\omega\in L^{2}(\Omega): \omega\vert_{I}\in H^{m}(I), \text{for all $I\in \mathcal{T}_{N}$}\}.
\end{equation*}
Then the corresponding norm and seminorm can be defined by
\begin{equation*}
\Vert w\Vert^{2}_{m,\mathcal{T}_{N}}=\sum_{j=1}^{N} \Vert w\Vert^{2}_{m,I_{j}},\quad \vert w\vert^{2}_{m,\mathcal{T}_{N}}= \sum_{j=1}^{N}\vert w\vert^{2}_{m,I_ {j}},
\end{equation*}
where $\Vert\cdot\Vert_{m, I_j}$ is the usual Sobolev norm and $\vert\cdot\vert_{m, I_j}$ is the usual semi-norm in $H^{m}(I_j)$. 
In particular, $\Vert\cdot\Vert_{I}$ and $(\cdot, \cdot)_{I}$ are usually used to stand for the $L^{2}(I)$-norm and the $L^{2}(I)$-inner product, respectively. Then on Bakhvalov-type mesh, we define the finite element space as
\begin{equation*}
V_{N}^{k}=\{v\in L^{2}(\Omega): v\vert_{I}\in \mathbb{P}_{k}(I),\quad \forall I\in \mathcal{T}_{N}\}.
\end{equation*}
Here $\mathbb{P}_{k}(I)$ is the space of polynomials of degree at most $k$ on $I$. It is worth noting that, the functions in $V_{N}^{k}$ are discontinuous at the boundary between two adjacent elements, so they might be multivalued at each node $\{x_{j}\}, j=0, 1, \cdots, N$. For a function $u\in H^{1}(\Omega, \mathcal{T}_{N})$, we define the jump and average at the interior node as
\begin{equation*}
[u(x_{j})]=u(x_{j}^{-})-u(x_{j}^{+}),\quad \{u(x_{j})\}=\frac{1}{2}\left(u(x_{j}^{+})+u(x_{j}^{-})\right),
\end{equation*}
where $u(x_{j}^{+})= \lim\limits_{x\rightarrow x_{j}^{+}}u(x)$ and $u(x_{j}^{-})=\lim\limits_{x\rightarrow x_{j}^{-}}u(x)$ for all $j=1, \cdots, N-1$.
In general, the definitions of jump and average can be extended to the boundary nodes $x_{0}$ and $x_{N}$, that is
\begin{equation*}
[u(x_{0})]=-u(x_{0}^{+}),\quad \{u(x_{0})\}=u(x_{0}^{+}),\quad [u(x_{N})]=u(x_{N}^{-}),\quad \{u(x_{N})\}=u(x_{N}^{-}).
\end{equation*}

Now we present the weak formulation for the problem \eqref{eq:S-1}: Find $u_{N} \in V_{N}^{k}$ such that
\begin{equation}\label{eq:SD}
B(u_{N},v_{N})=L(v_{N}) \quad \text{for all $v_{N} \in V_{N}^{k}$},
\end{equation}
where
\begin{equation*}
\begin{aligned}
&B(u, v)=B_{1}(u, v)+B_{2}(u, v)+B_{3}(u, v),\\
&B_{1}(u, v)=\sum_{j=1}^{N}\int_{I_{j}}\varepsilon u'v'\mr{d}x-\varepsilon \sum_{j=0}^{N}\{u'(x_{j})\}[v(x_{j})]+\varepsilon \sum_{j=0}^{N}[u(x_{j})]\{v'(x_{j})\}+\sum_{j=0}^{N} \mu(x_{j})[u(x_{j})][v(x_{j})],\\
&B_{2}(u, v)=\sum_{j=1}^{N}\int_{I_{j}}b(x) u'v\mr{d}x-\sum_{j=0}^{N-1}b(x_{j})[u(x_{j})]v(x_{j}^{+}),\\
&B_{3}(u, v)=\sum_{j=1}^{N}\int_{I_{j}}c(x)uv\mr{d}x,\\
&L(v)=\sum_{j=1}^{N}\int_{I_{j}}f v\mr{d}x.
\end{aligned}
\end{equation*}
Note that the penalty  parameters $\mu(x_{j}) (j=0, 1, \cdots, N)$ associated with $x_{j}$ are some nonnegative constants. In this paper, we will take $\mu(x_{j})$ as
\begin{equation}\label{eq: penalization parameters}
\mu(x_{j})=\left\{
\begin{aligned}
&1,\quad 0\le j\le N/2,\\
&N^{2},\quad N/2+1\le j\le N.
\end{aligned}
\right.
\end{equation}

\begin{lemma}\label{Galerkin orthogonality property}
Let $u$ be the exact solution of \eqref{eq:S-1}, then for all $v\in V_{N}^{k}$, we have the following Galerkin orthogonality
\begin{equation*}
B(u-u_{N}, v)=0,
\end{equation*}
where $B(\cdot,\cdot)$ is defined as \eqref{eq:SD}.
\end{lemma}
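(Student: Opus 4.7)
The plan is to prove consistency of the NIPG formulation: I would show that the exact solution $u$ itself satisfies $B(u,v)=L(v)$ for every $v\in V_N^k$. Once that is established, subtracting the discrete equation \eqref{eq:SD} instantly yields $B(u-u_N,v)=0$, so the entire content of the lemma is this consistency statement.

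The first step is to exploit the regularity of $u$. Since $u$ is the classical solution of \eqref{eq:S-1}, it belongs to $H^2(\Omega)\cap H_0^1(\Omega)$; hence $[u(x_j)]=0$ at every interior node by continuity, and $[u(x_0)]=-u(0)=0$, $[u(x_N)]=u(1)=0$ by the boundary conditions. This immediately kills every term in $B(u,v)$ that carries a factor $[u(x_j)]$: the lifting term $\varepsilon\sum_j[u(x_j)]\{v'(x_j)\}$ and the penalty term $\sum_j\mu(x_j)[u(x_j)][v(x_j)]$ inside $B_1$, as well as the upwind coupling $\sum_j b(x_j)[u(x_j)]v(x_j^+)$ inside $B_2$.

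Next I would integrate by parts elementwise in the surviving diffusion piece $\sum_j\int_{I_j}\varepsilon u'v'\,\mathrm{d}x$. Each $I_j$ contributes bulk $-\int_{I_j}\varepsilon u''v\,\mathrm{d}x$ together with boundary values $\varepsilon u'(x_j^-)v(x_j^-)-\varepsilon u'(x_{j-1}^+)v(x_{j-1}^+)$. Because $u'$ is continuous across nodes, summing these boundary contributions and regrouping the telescoping sum (splitting off the endpoint pieces using the extended conventions for $[\cdot]$ and $\{\cdot\}$ at $x_0$ and $x_N$) reassembles precisely $\varepsilon\sum_{j=0}^N\{u'(x_j)\}[v(x_j)]$, which exactly cancels the corresponding consistency term in $B_1$. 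After cancellation, $B_1(u,v)=-\int_\Omega\varepsilon u''v\,\mathrm{d}x$; combining with $B_2(u,v)=\int_\Omega bu'v\,\mathrm{d}x$ and $B_3(u,v)=\int_\Omega cuv\,\mathrm{d}x$ and invoking the strong form $-\varepsilon u''+bu'+cu=f$ delivers $B(u,v)=\int_\Omega fv\,\mathrm{d}x=L(v)$.

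The only non-trivial step is the bookkeeping in the integration by parts: one must check that the boundary-node conventions $[v(x_0)]=-v(x_0^+)$, $\{u'(x_0)\}=u'(x_0^+)$, $[v(x_N)]=v(x_N^-)$, $\{u'(x_N)\}=u'(x_N^-)$ are exactly those that make the endpoint terms align in sign with the interior telescoping contributions. This is precisely why these conventions were chosen in the definition of the bilinear form, so the check is mechanical but is the one place where a sign error could derail the argument.
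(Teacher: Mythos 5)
Your proposal is correct: the consistency argument (continuity of $u$ and $u'$ kills all jump terms, elementwise integration by parts with the stated endpoint conventions reassembles $\varepsilon\sum_{j=0}^{N}\{u'(x_j)\}[v(x_j)]$, and the strong form then gives $B(u,v)=L(v)$, whence subtracting \eqref{eq:SD} yields orthogonality). This is exactly the standard argument the paper delegates to the cited reference \cite{Zhu1Tan2Yin3:2015-H}, so your route coincides with the paper's intended proof.
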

\begin{proof}
Using the similar arguments in \cite{Zhu1Tan2Yin3:2015-H}, we draw this conclusion directly.
\end{proof}

For all $v\in V_{N}^{k}$, the natural norm associated with $B(\cdot,\cdot)$ is defined by
\begin{equation}\label{eq:SS-1}
\Vert v \Vert_{NIPG}:=
\left(\varepsilon \sum_{j=1}^{N}\Vert v'\Vert_{I_{j}}^{2}+\sum_{j=1}^{N}\gamma\Vert v \Vert_{I_{j}}^{2}+\sum_{j=0}^{N}\left(\mu(x_{j})+\frac{1}{2}b(x_{j})\right)[v(x_{j})]^{2}\right)^{\frac{1}{2}}.
\end{equation}
According to the similar arguments in \cite{Zhu1Tan2Yin3:2015-H}, it is easy to see that one has the coercivity
\begin{equation}\label{eq:coercity}
B(v_{N}, v_{N}) \ge \Vert v_{N} \Vert_{NIPG}^2\quad \forall v_{N}\in V_{N}^{k}.
\end{equation}
Then from Lax-Milgram lemma \cite[Theorem 1.1.3]{Cia1:2002-motified} and \eqref{eq:coercity}, $u_{N}$ is the unique solution of \eqref{eq:SD}.

\subsection{Some preliminary conclusions}
\begin{lemma}
Suppose that Assumption \ref{ass:S-1} holds true. Then on Bakhvalov-type mesh \eqref{eq:Bakhvalov-type mesh-Roos}, one has
\begin{align}
& h_{N/2+2}\ge \cdots\ge h_{N},\label{eq:mesh-1}\\
&\frac{\sigma\varepsilon}{4\alpha}\le h_{N/2+2}\le \frac{\sigma\varepsilon}{\alpha}, \label{eq:mesh-2}\\
&\frac{\sigma\varepsilon}{2\alpha}\le h_{N/2+1}\le \frac{2\sigma}{\alpha}N^{-1},\label{eq:mesh-3}\\
&N^{-1}\le h_{j}\le 2N^{-1},\quad 1\le j\le N/2.\label{eq:mesh-4}
\end{align}
Furthermore, we present estimates at some special points,
\begin{align}
x_{N/2+1}\le 1-C\frac{\sigma\varepsilon}{\alpha}\ln N, \quad x_{N/2}\le 1+C\frac{\sigma\varepsilon}{\alpha}\vert \ln \varepsilon\vert\label{eq:mesh-5}.
\end{align}
In particular, for $N/2+2\le j\le N$ and $0\le \lambda\le\sigma$, 
\begin{equation}
h_{j}^{\lambda}\max_{x_{j-1}\le x\le x_{j}}e^{-\alpha(1-x)/\varepsilon}\le h_{j}^{\lambda}e^{-\alpha(1-x_{j})/\varepsilon}\le C\varepsilon^{\lambda}N^{-\lambda}.\label{eq:mesh-7}
\end{equation}
\end{lemma}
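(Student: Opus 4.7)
The plan is to read everything off the closed form of $\psi$. For $1 \le j \le N/2$ we sit on the linear piece $\psi(t) = 2\tau t$, so $h_j = 2\tau/N$; combining this with $1/2 \le \tau \le 1$ (the lower bound is assumed, the upper bound follows because $\tau = 1 + \frac{\sigma\varepsilon}{\alpha}\ln\varepsilon \le 1$) gives \eqref{eq:mesh-4} at once.

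For $j = N/2+k$ with $k \ge 1$, I would first rewrite the mesh width in the clean form
\begin{equation*}
h_{N/2+k} = \frac{\sigma\varepsilon}{\alpha}\ln\frac{\varepsilon + 2k(1-\varepsilon)/N}{\varepsilon + 2(k-1)(1-\varepsilon)/N}.
\end{equation*}
Setting $\beta = \varepsilon$ and $a = 2(1-\varepsilon)/N$, the monotonicity \eqref{eq:mesh-1} falls out of the elementary identity $(\beta+ka)^{2} > (\beta+(k-1)a)(\beta+(k+1)a)$, which forces $h_{N/2+k+1}/h_{N/2+k} < 1$. The bounds \eqref{eq:mesh-2} and \eqref{eq:mesh-3} are then harvested from $\ln(1+y) \le y$ (for the upper bounds) and $\ln(1+y) \ge \ln 2$ when $y \ge 1$ (for the lower bound on $h_{N/2+1}$, whose hypothesis $\varepsilon N \le 2(1-\varepsilon)$ comes from Assumption \ref{ass:S-1}); the lower bound in \eqref{eq:mesh-2} follows similarly by estimating $\ln\frac{\varepsilon N + 4(1-\varepsilon)}{\varepsilon N + 2(1-\varepsilon)}$ from below using the same assumption.

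For the special-point estimates \eqref{eq:mesh-5}, one has $x_{N/2} = \tau$, so $1 - x_{N/2} = \frac{\sigma\varepsilon}{\alpha}|\ln\varepsilon|$ is exact and the stated inequality is trivial; the second inequality is immediate from $x_{N/2+1} = 1 + \frac{\sigma\varepsilon}{\alpha}\ln(\varepsilon + 2(1-\varepsilon)/N)$ together with $\varepsilon + 2(1-\varepsilon)/N \le (C+2)N^{-1}$ from Assumption \ref{ass:S-1}, which gives $\ln(\varepsilon + 2(1-\varepsilon)/N) \le -\ln N + \ln(C+2)$.

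The only real obstacle is \eqref{eq:mesh-7}, and my plan there is to combine two complementary estimates. First, $\ln(1+y) \le y$ applied to the closed form above yields $h_{N/2+k} \le \frac{C\varepsilon}{k-1}$ for $k \ge 2$. Second, the identity $1 - x_{N/2+k} = -\frac{\sigma\varepsilon}{\alpha}\ln(\varepsilon + 2k(1-\varepsilon)/N)$ converts the exponential into a power,
\begin{equation*}
e^{-\alpha(1-x_{N/2+k})/\varepsilon} = \bigl(\varepsilon + 2k(1-\varepsilon)/N\bigr)^{\sigma} \le (Ck/N)^{\sigma},
\end{equation*}
where the second step again uses Assumption \ref{ass:S-1}. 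Multiplying the two estimates, using $k \le N/2$, and noting that $\lambda \le \sigma$ lets me absorb $k^{\sigma}/(k-1)^{\lambda}$ into $CN^{\sigma-\lambda}$, produces exactly $C\varepsilon^{\lambda}N^{-\lambda}$. The case $j = N/2+2$ (where the bound $\frac{C\varepsilon}{k-1}$ degenerates at $k=1$) is handled directly from \eqref{eq:mesh-2}. The maximum over $x_{j-1}\le x \le x_j$ is attained at $x_j$ by monotonicity of the exponential, so that first inequality is automatic.
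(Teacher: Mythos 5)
Your proposal is correct, and it is essentially the argument the paper delegates to its citation of \cite[Lemma 3]{Zha1Liu2:2020-O}: everything is read off the mesh-generating function $\psi$, with the closed form $h_{N/2+k}=\frac{\sigma\varepsilon}{\alpha}\ln\frac{\varepsilon+2k(1-\varepsilon)/N}{\varepsilon+2(k-1)(1-\varepsilon)/N}$, the logarithm inequalities for \eqref{eq:mesh-1}--\eqref{eq:mesh-4}, and the conversion $e^{-\alpha(1-x_{N/2+k})/\varepsilon}=(\varepsilon+2k(1-\varepsilon)/N)^{\sigma}$ combined with $h_{N/2+k}\le C\varepsilon/(k-1)$ for \eqref{eq:mesh-7}. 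Two cosmetic remarks: the degenerate index $k=1$ corresponds to $j=N/2+1$, which lies outside the range of \eqref{eq:mesh-7}, so your separate treatment of $j=N/2+2$ via \eqref{eq:mesh-2} is redundant (though harmless), and the explicit constants $\frac14$ and $\frac12$ in \eqref{eq:mesh-2}--\eqref{eq:mesh-3} tacitly presume the constant in Assumption \ref{ass:S-1} is of moderate size (e.g.\ $\varepsilon N\le 2(1-\varepsilon)$), exactly as you note.
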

\begin{proof}
Applying the similar method in \citep[Lemma 3]{Zha1Liu2:2020-O}, we can derive this lemma without any difficulties.
\end{proof}
\begin{lemma}
Suppose that Assumption \ref{ass:S-1} holds, then on Bakhvalov-type mesh \eqref{eq:Bakhvalov-type mesh-Roos}, 
\begin{equation*}
\vert E(x_{N/2+1})\vert\le CN^{-\sigma},\quad \vert E(x_{N/2})\vert\le C\varepsilon^{\sigma}.
\end{equation*}
\end{lemma}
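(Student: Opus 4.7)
The plan is to read both inequalities directly off the exponential pointwise bound $|E(x)|\le C e^{-\alpha(1-x)/\varepsilon}$ supplied by Theorem \ref{eq:AASS} (take $l=0$), by evaluating the exponent at the two mesh points in question using the explicit definition of the mesh-generating function $\psi$ in \eqref{eq:Bakhvalov-type mesh-Roos}.

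First I would handle $x_{N/2}$, which by construction coincides with the transition point $\tau=1+\tfrac{\sigma\varepsilon}{\alpha}\ln\varepsilon$. Substituting gives
\[
\frac{\alpha(1-x_{N/2})}{\varepsilon}=\frac{\alpha}{\varepsilon}\cdot\left(-\frac{\sigma\varepsilon}{\alpha}\ln\varepsilon\right)=-\sigma\ln\varepsilon,
\]
so that $e^{-\alpha(1-x_{N/2})/\varepsilon}=\varepsilon^{\sigma}$, which combined with Theorem \ref{eq:AASS} yields $|E(x_{N/2})|\le C\varepsilon^{\sigma}$.

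For $x_{N/2+1}=\psi\bigl(\tfrac12+\tfrac1N\bigr)$, the defining formula on $[\tfrac12,1]$ gives
\[
1-x_{N/2+1}=-\frac{\sigma\varepsilon}{\alpha}\ln\Bigl[\varepsilon+\tfrac{2(1-\varepsilon)}{N}\Bigr],
\]
and therefore $e^{-\alpha(1-x_{N/2+1})/\varepsilon}=\bigl[\varepsilon+\tfrac{2(1-\varepsilon)}{N}\bigr]^{\sigma}$. Invoking Assumption \ref{ass:S-1} ($\varepsilon\le CN^{-1}$), the bracket is bounded by $C N^{-1}$, giving $e^{-\alpha(1-x_{N/2+1})/\varepsilon}\le CN^{-\sigma}$ and hence $|E(x_{N/2+1})|\le CN^{-\sigma}$. (Alternatively, one can feed the mesh estimate \eqref{eq:mesh-5} directly into the exponential; the effect is the same.)

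There is no real obstacle here: both bounds reduce to elementary algebra once one plugs the mesh points into $\psi$. The only mild subtlety is keeping track that $\ln\varepsilon<0$ so that $1-\tau>0$, and using Assumption \ref{ass:S-1} to absorb the $\varepsilon$ term inside the logarithm for the second estimate, ensuring the final rate is genuinely $N^{-\sigma}$ rather than something weaker.
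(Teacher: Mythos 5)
Your proposal is correct and follows essentially the same route as the paper: the paper's one-line proof simply invokes the layer bound $\vert E(x)\vert\le Ce^{-\alpha(1-x)/\varepsilon}$ together with the mesh definition \eqref{eq:Bakhvalov-type mesh-Roos} and the point estimates \eqref{eq:mesh-5}, and your explicit evaluation of $\psi$ at $t=\tfrac12$ and $t=\tfrac12+\tfrac1N$ (using Assumption \ref{ass:S-1} to absorb the $\varepsilon$ inside the logarithm) is exactly the computation behind those estimates.
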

\begin{proof}
Through \eqref{eq:Bakhvalov-type mesh-Roos} and \eqref{eq:mesh-5}, the conclusion of this lemma can be obtained.
\end{proof}
\section{Interpolation and interpolation error}

\subsection{Interpolation}\label{QQW-1}
Below we introduce a new interpolation operator $\Pi$, that is
\begin{equation}\label{eq:H-1}
(\Pi u)\vert_{I}=\left\{
\begin{aligned}
& (L_{k}u)\vert_I, \quad \text{if $I\subset [x_{N/2+1}, 1]$},\\
& (P_{h}u)\vert_I,\quad \text{if $I\subset [0, x_{N/2+1}]$},
\end{aligned}
\right.
\end{equation}
where $L_{k}u$ is $k$-degree Gau{\ss} Lobatto interpolation, and $P_{h}u$ is Gau{\ss} Radau interpolation of $u$. Then, we shall provide the definitions of these two interpolations.

First, assume that $x_{j-1}= s_{0}< s_{1}< \cdots < s_{k}=x_{j}$ is the Gau{\ss} Lobatto points, where $s_{1}, s_{2}, \cdots, s_{k-1}$ are zeros of the derivative of $k$-degree Legendre polynomial on $I_{j}$. For $\varphi\in C(\overline{\Omega})$, let $(L_{k} \varphi)\vert_{I_j}$ for $j=1, \ldots, N$ be the Lagrange interpolation of degree $k$ at the Gau{\ss} Lobatto points $\{t_{m}\}_{m=0}^{k}$.
Then for $\varphi(x)\in H^{k+2}(I_{j})$, we have
\begin{equation}\label{eq: Gauss-Lobatto-1}
\vert(\varphi'-(L_{k} \varphi)', v')_{I_{j}}\vert\le Ch_{j}^{k+1}\vert \varphi\vert_{k+2, I_{j}}\vert v\vert_{1, I_{j}}\quad\text{for all $v\in \mathbb{P}_{k}$}.
\end{equation}

Furthermore, if $k\ge 1$, we define Gau{\ss} Radau interpolation $P_{h}u\in V_{h}^{k}$ by: For $j=1, 2, \cdots, N$, 
\begin{align}
&\int_{I_j}(P_{h}u)v_{h}\mr{d}x=\int_{I_j}uv_{h}\mr{d}x,\quad \forall v_{h}\in \mathbb{P}_{k-1},\label{eq:J-1}\\
&(P_{h}u)(x_{j}^{-})=u(x_{j}^{-}),\label{eq:J-2}
\end{align}
see \cite{Che1Shu2:2008-S} for more details.
\begin{remark}\label{special}
Now we provide the situation at $x_{N/2+1}$. On the one hand, by using the definition of Gau{\ss} Radau interpolation \eqref{eq:J-2}, there is
\begin{equation}\label{eq: Kk-11}
(u-P_{h} u)(x_{N/2+1}^{-})=0.
\end{equation}
On the other hand, according to Gau{\ss} Lobatto interpolation, 
\begin{equation}\label{eq: Kk-12}
(u-L_{k} u)(x_{N/2+1}^{+})=0.
\end{equation}
Therefore, from \eqref{eq:H-1}, \eqref{eq: Kk-11} and \eqref{eq: Kk-12}, we have
\begin{equation*}
[(u-\Pi u)(x_{N/2+1})]=0.
\end{equation*}
\end{remark}

\subsection{Interpolation error}
Recall that $L_{k}$ is the Lagrange interpolation operator with Gau{\ss} Lobatto points as the interpolation nodes. From the interpolation theories  in Sobolev spaces \citep[Theorem 3.1.4]{Cia1:2002-motified}, for all $v\in W^{k+1,m}(I_j)$,  
\begin{equation}\label{eq:interpolation-theory}
\Vert v-L_{k}v \Vert_{W^{l,n}(I_j)}\le C h_{j}^{k+1-l+1/n-1/m}\vert v \vert_{W^{k+1,m}(I_j)}, 
\end{equation}
where $l=0, 1$ and $1\le m, n\le \infty$.   
Then recall that $P_{h}v$ is Gau{\ss} Radau interpolation of $v$. According to the arguments in \cite{Cia1:2002-motified}, for all $v\in H^{k+1}(I_{j})$, we have
\begin{equation}\label{eq:interpolation-theory-1}
\Vert v-P_{h}v \Vert_{I_{j}}+h_{j}^{\frac{1}{2}}\Vert v-P_{h}v \Vert_{L^{\infty}(I_j)}\le C h_{j}^{k+1}\vert v \vert_{k+1, I_{j}}, \quad j=1, 2, \cdots, N.
\end{equation}
On the basis of that, it is straightforward for us to derive the following error.

\begin{lemma}
Suppose that Assumption \ref{ass:S-1} hold and $\mu(x_{j})$ is presented in \eqref{eq: penalization parameters}. Then on Bakhvalov-type mesh \eqref{eq:Bakhvalov-type mesh-Roos} with $\sigma\ge k+1$, one has
\begin{align}
&\Vert S-P_{h}S\Vert_{[0, x_{N/2+1}]}+\Vert u-\Pi u\Vert_{[0, 1]}\le CN^{-(k+1)},\label{eq:QQ-1}\\
&\Vert (S-\Pi S)'\Vert_{[0,1]}\le C N^{-k},\label{eq: QQ-2}\\
&\Vert (E-P_{h}E)'\Vert_{[0, x_{N/2+1}]}\le C\varepsilon^{\sigma}N+C\varepsilon^{-\frac{1}{2}}N^{-\sigma},\label{eq:QQ-3}\\
&\Vert u-\Pi u\Vert_{L^{\infty}(I_{j})}\le CN^{-(k+1)},\quad j=1, 2, \cdots, N\label{eq:QQ-4}\\
&\Vert L_{k}u-u\Vert_{NIPG, [x_{N/2+2}, 1]}\le CN^{-k},\label{post-process-1}\\
&\Vert u-P_{h}u\Vert_{NIPG, [0, x_{N/2+1}]}\le C N^{-(k+\frac{1}{2})}.\label{post-process-2}
\end{align}
\end{lemma}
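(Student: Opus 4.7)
The plan is to invoke the decomposition $u = S + E$ from Theorem \ref{eq:AASS} and bound each component element-by-element, using \eqref{eq:interpolation-theory} for $L_k$ inside the layer and \eqref{eq:interpolation-theory-1} for $P_h$ outside. The mesh bounds \eqref{eq:mesh-1}--\eqref{eq:mesh-7}, together with $|S|_{k+1,I_j}\le Ch_j^{1/2}$ and $|E|_{k+1,I_j}\le C\varepsilon^{-(k+1)}h_j^{1/2}e^{-\alpha(1-x_j)/\varepsilon}$, will convert the resulting quantities into powers of $N^{-1}$. On the outer region $[0,\tau]$ one has the uniform estimate $e^{-\alpha(1-x)/\varepsilon}\le \varepsilon^\sigma$, which neutralizes the $\varepsilon^{-(k+1)}$ coming from $|E|_{k+1,I_j}$ whenever $\sigma\ge k+1$.

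Estimates \eqref{eq:QQ-1}, \eqref{eq: QQ-2} and \eqref{eq:QQ-4} then follow directly. For \eqref{eq:QQ-1}, the $S$-part on each element satisfies $\Vert S - \Pi S\Vert_{I_j}\le Ch_j^{k+1}$, giving $CN^{-(k+1)}$ on $[0,x_{N/2+1}]$ (by \eqref{eq:mesh-3}, \eqref{eq:mesh-4}) and on $[x_{N/2+1},1]$ after invoking $h_j\le h_{N/2+2}\le C\varepsilon\le CN^{-1}$ from \eqref{eq:mesh-2} and Assumption \ref{ass:S-1}. The $E$-part on the outer region contributes $CN^{-(k+1)}\varepsilon^{\sigma-k-1}\le CN^{-(k+1)}$, and on the inner layer I will apply \eqref{eq:mesh-7} with $\lambda=k+1$ to absorb the $\varepsilon^{-(k+1)}$ factor. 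Estimate \eqref{eq: QQ-2} is the analogue with one fewer power of $h_j$, and \eqref{eq:QQ-4} is the $L^\infty$ version using the $h_j^{1/2}$ factor built into \eqref{eq:interpolation-theory-1} and the pointwise case of \eqref{eq:interpolation-theory}.

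Estimate \eqref{eq:QQ-3} is more delicate, because the naive interpolation bound $\Vert (E - P_h E)'\Vert_{I_j}\le Ch_j^{k+1/2}\varepsilon^{-(k+1)}e^{-\alpha(1-x_j)/\varepsilon}$ loses one power of $\varepsilon^{-1}$ relative to the claim on $[0,\tau]$. My plan is to bypass the interpolation estimate on the outer piece by the triangle inequality $\Vert (E-P_h E)'\Vert_{[0,\tau]} \le \Vert E'\Vert_{[0,\tau]} + \Vert (P_h E)'\Vert_{[0,\tau]}$: direct integration of $|E'|\le C\varepsilon^{-1}e^{-\alpha(1-x)/\varepsilon}$ yields $\Vert E'\Vert_{[0,\tau]}\le C\varepsilon^{\sigma-1/2}\le C\varepsilon^\sigma N$, where the last inequality follows from Assumption \ref{ass:S-1} via $\varepsilon^{-1/2}\le CN$. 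For $(P_h E)'$ I will apply a local inverse inequality together with the $L^\infty$-stability of Gau{\ss} Radau interpolation, giving $\Vert (P_h E)'\Vert_{I_j}\le Ch_j^{-1/2}\Vert E\Vert_{L^\infty(I_j)}$; since $h_j^{-1}\le CN$ on the outer mesh and $\Vert E\Vert_{L^\infty([0,\tau])}\le C\varepsilon^\sigma$, summing over $N/2$ elements produces $C\varepsilon^\sigma N$. On $I_{N/2+1}$ the usual bound applies: combining $h_{N/2+1}\le CN^{-1}$, $e^{-\alpha(1-x_{N/2+1})/\varepsilon}\le CN^{-\sigma}$ (implied by \eqref{eq:mesh-5}) and $\varepsilon^{-1}\le CN$ yields the $C\varepsilon^{-1/2}N^{-\sigma}$ term.

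For the NIPG-norm estimates \eqref{post-process-1} and \eqref{post-process-2}, the jump contributions simplify substantially: at every interior node $x_j$ one has $(L_k u)(x_j^\pm) = u(x_j)$ by the Lobatto endpoint condition, so $[L_k u - u](x_j) = 0$ on $[x_{N/2+2},1]$; similarly $(P_h u)(x_j^-) = u(x_j)$ by \eqref{eq:J-2}, so on $[0,x_{N/2+1}]$ each jump reduces to $(P_h u - u)(x_j^+)$, which by \eqref{eq:interpolation-theory-1} is bounded (squared) by $Ch_{j+1}^{2k+1}|u|_{k+1,I_{j+1}}^2$ and sums (with $\mu(x_j)=1$) to $CN^{-(2k+1)}$, while the boundary jump at $x_{N/2+1}$ vanishes by \eqref{eq:J-2}. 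The $\gamma\Vert\cdot\Vert^2$ contribution is already controlled by the arguments used for \eqref{eq:QQ-1}. The hardest step is the $\varepsilon\Vert\cdot'\Vert^2$ contribution for \eqref{post-process-1}: writing $h_j^{2k+1}e^{-2\alpha(1-x_j)/\varepsilon} = h_j\cdot (h_j^k e^{-\alpha(1-x_j)/\varepsilon})^2$ and applying \eqref{eq:mesh-7} with $\lambda=k$ bounds the pointwise factor by $C(\varepsilon/N)^{2k}$, but a careful summation that exploits the explicit geometric distribution $h_j\sim \varepsilon/(j-N/2)$ and $e^{-\alpha(1-x_j)/\varepsilon}\sim((j-N/2)/N)^\sigma$ of the inner Bakhvalov mesh is needed to obtain $\sum_{j\ge N/2+3}h_j^{2k+1}e^{-2\alpha(1-x_j)/\varepsilon}\le C\varepsilon^{2k+1}N^{-2k}$ without an extraneous $\ln N$, which after multiplication by $\varepsilon\cdot \varepsilon^{-2(k+1)}$ delivers the claimed $CN^{-k}$ rate. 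For \eqref{post-process-2}, the $\varepsilon\Vert\cdot'\Vert^2$ part on the outer region is controlled by $\varepsilon$ times the squared bounds of \eqref{eq: QQ-2} and \eqref{eq:QQ-3}, and under Assumption \ref{ass:S-1} reduces to $CN^{-(2k+1)}$, matching the $CN^{-(k+1/2)}$ claim after taking square roots.
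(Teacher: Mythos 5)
Your overall strategy coincides with the paper's (decompose $u=S+E$, use \eqref{eq:interpolation-theory} and \eqref{eq:interpolation-theory-1} elementwise, exploit the vanishing Lobatto jumps on $[x_{N/2+2},1]$ and the Radau endpoint condition \eqref{eq:J-2} on $[0,x_{N/2+1}]$), but two points need repair. First, you repeatedly read Assumption \ref{ass:S-1} backwards: $\varepsilon\le CN^{-1}$ gives $\varepsilon^{-1}\ge cN$, so the inequalities ``$\varepsilon^{-1/2}\le CN$'' and ``$\varepsilon^{-1}\le CN$'' you invoke are false. In \eqref{eq:QQ-3} the bound $\Vert E'\Vert_{[0,\tau]}\le C\varepsilon^{\sigma-\frac12}$ is correct, but it is not dominated by $C\varepsilon^{\sigma}N$; it is admissible only because $\varepsilon^{\sigma-\frac12}=\varepsilon^{-\frac12}\varepsilon^{\sigma}\le C\varepsilon^{-\frac12}N^{-\sigma}$ (using $\varepsilon^{\sigma}\le CN^{-\sigma}$), i.e.\ it is absorbed by the \emph{second} term of the target. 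Similarly, on $I_{N/2+1}$ the factor $\varepsilon^{-\frac12}$ must come from $h_{N/2+1}\ge \sigma\varepsilon/(2\alpha)$ in \eqref{eq:mesh-3} (and from $\Vert E'\Vert_{I_{N/2+1}}^{2}\le C\varepsilon^{-1}N^{-2\sigma}$ directly), not from ``$\varepsilon^{-1}\le CN$''.

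The genuine gap is in your jump estimate for \eqref{post-process-2}. Bounding $[(u-P_hu)(x_j)]^{2}\le Ch_{j+1}^{2k+1}\vert u\vert_{k+1,I_{j+1}}^{2}$ breaks down at $j=N/2$, because there the right element is $I_{N/2+1}$ and the layer part carries $\vert E\vert_{k+1,I_{N/2+1}}^{2}\le C\varepsilon^{-(2k+1)}N^{-2\sigma}$, so that term is of size $C\bigl(h_{N/2+1}/\varepsilon\bigr)^{2k+1}N^{-2\sigma}\le C(\varepsilon N)^{-(2k+1)}N^{-2\sigma}$, which is \emph{not} $O(N^{-(2k+1)})$ once $\varepsilon$ is much smaller than $N^{-1}$ (e.g.\ $\varepsilon=N^{-2}$, $\sigma=k+1$ gives only $O(N^{-1})$), and blows up for exponentially small $\varepsilon$. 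The paper avoids this by splitting $u=S+E$ inside the jump terms and estimating the layer contribution through the boundedness of the Radau projection and the smallness of $E$ itself, $\Vert E-P_hE\Vert_{L^{\infty}(I_{N/2+1})}\le C\Vert E\Vert_{L^{\infty}(I_{N/2+1})}\le CN^{-\sigma}$; you need this (or an equivalent) at the node $x_{N/2}$. Conversely, for \eqref{post-process-1} your proposed fine summation based on $h_j\sim\varepsilon/(j-N/2)$ is unnecessary (and those asymptotics are nowhere established in the paper): applying \eqref{eq:mesh-7} with $\lambda=k+\frac12\le\sigma$ gives $\varepsilon\,\varepsilon^{-2(k+1)}h_j^{2k+1}e^{-2\alpha(1-x_j)/\varepsilon}\le CN^{-(2k+1)}$ per element, and a crude sum over at most $N$ elements already yields the required $CN^{-2k}$ without any logarithm, which is exactly the paper's argument.
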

\begin{proof}
First, \eqref{eq:interpolation-theory} and \eqref{eq:interpolation-theory-1} yield (\ref{eq:QQ-1}-\ref{eq:QQ-4}) easily, thus, we just estimate \eqref{post-process-1} and \eqref{post-process-2}.

According to the NIPG norm \eqref{eq:SS-1} and the definition of Gau{\ss} Lobatto interpolation, one has $[(L_{k}u-u)(x_{j})]=0, j= N/2+2, \cdots, N$. Further, 
\begin{equation*}
\begin{aligned}
\Vert L_{k}u-u\Vert_{NIPG, [x_{N/2+2}, 1]}^{2}&=\varepsilon \sum_{j=N/2+2}^{N}\Vert (L_{k}u-u)'\Vert_{I_{j}}^{2}+\sum_{j=N/2+2}^{N}\gamma\Vert L_{k}u-u\Vert_{I_{j}}^{2}.
\end{aligned}
\end{equation*}
Then from Theorem \ref{eq:AASS}, we will divide the first item for analysis, that is,
\begin{equation*}
\begin{aligned}
\vert\varepsilon \sum_{j=N/2+2}^{N}\Vert (L_{k}u-u)'\Vert_{I_{j}}^{2}\vert
&\le C\varepsilon \sum_{j=N/2+2}^{N}\Vert (L_{k}S-S)'\Vert_{I_{j}}^{2}+C\varepsilon \sum_{j=N/2+2}^{N}\Vert (L_{k}E-E)'\Vert_{I_{j}}^{2}.
\end{aligned}
\end{equation*}
Actually, from \eqref{eq:decomposition} and \eqref{eq:interpolation-theory}, we can derive
\begin{equation*}
\varepsilon \sum_{j=N/2+2}^{N}\Vert (L_{k}S-S)'\Vert_{I_{j}}^{2}\le C\varepsilon\sum_{j=N/2+2}^{N}h^{2k}_{j}\Vert S^{(k+1)}\Vert_{I_{j}}^{2}\le C\varepsilon^{2k+2} N,
\end{equation*}
where we also use \eqref{eq:mesh-1} and \eqref{eq:mesh-2}. Note that \eqref{eq:decomposition}, \eqref{eq:mesh-7} and \eqref{eq:interpolation-theory} yield
\begin{equation*}
\begin{aligned}
&\varepsilon \sum_{j=N/2+2}^{N}\Vert (L_{k}E-E)'\Vert_{I_{j}}^{2}\le C\varepsilon \sum_{j=N/2+2}^{N}h^{2k}_{j}\Vert E^{(k+1)}\Vert_{I_{j}}^{2}\\
&\le C\varepsilon \sum_{j=N/2+2}^{N}h^{2k+1}_{j}e^{-2\alpha(1-x_{j})/\varepsilon}\varepsilon^{-2(k+1)}\\
&\le C\sum_{j=N/2+2}^{N}N^{-(2k+1)}\le CN^{-2k}.
\end{aligned}
\end{equation*}
Moreover, by using the similar method, one has
\begin{equation*}
\vert\sum_{j=N/2+2}^{N}\gamma \Vert L_{k}u-u\Vert_{I_{j}}^{2}\vert\le C\sum_{j=N/2+2}^{N}\Vert L_{k}S-S\Vert_{I_{j}}^{2}+C\sum_{j=N/2+2}^{N}\Vert L_{k}E-E\Vert_{I_{j}}^{2}\le C\varepsilon N^{-(2k+1)}.
\end{equation*}
Thus the derivation of \eqref{post-process-1} has been completed.

Now let's analyze the estimate of \eqref{post-process-2}. Through \eqref{eq:SS-1} and Remark \ref{special},
\begin{equation*}
\begin{aligned}
\Vert u-P_{h}u\Vert_{NIPG, [0,x_{N/2+1}]}^{2}&=\varepsilon \sum_{j=1}^{N/2+1}\Vert (u-P_{h}u)'\Vert_{I_{j}}^{2}+\sum_{j=1}^{N/2+1}\gamma\Vert u-P_{h}u\Vert_{I_{j}}^{2}\\&+\sum_{j=0}^{N/2}\mu(x_{j})[(u-P_{h}u)(x_{j})]^{2}+\frac{1}{2}\sum_{j=0}^{N/2}b(x_{j})[(u-P_{h}u)(x_{j})]^{2}\\&=\Lambda_{1}+\Lambda_{2}+\Lambda_{3}+\Lambda_{4}.
\end{aligned}
\end{equation*}
Next, we estimate $\Lambda_{1}$, $\Lambda_{2}$, $\Lambda_{3}$ and $\Lambda_{4}$ in turn.

For $\Lambda_{1}$, we first decompose it into the following forms,
\begin{equation*}
\Lambda_{1}\le \varepsilon\sum_{j=1}^{N/2+1}\Vert(S-P_{h}S)'\Vert_{I_{j}}^{2}+\varepsilon\sum_{j=1}^{N/2+1}\Vert(E-P_{h}E)'\Vert_{I_{j}}^{2}.
\end{equation*}
For one thing, through \eqref{eq:interpolation-theory-1} and \eqref{eq:decomposition}, 
\begin{equation*}
\vert\varepsilon\sum_{j=1}^{N/2+1}\Vert(S-P_{h}S)'\Vert_{I_{j}}^{2}\vert\le C\varepsilon \sum_{j=1}^{N/2+1}h_{j}^{2k}\Vert S^{(k+1)}\Vert^{2}_{I_{j}}\le C\varepsilon N^{-2k}.
\end{equation*}
For another, by means of the triangle inequality and the inverse inequality \citep[ Theorem 3.2.6]{Cia1:2002-motified}, we have
\begin{equation*}
\begin{aligned}
\vert\varepsilon\sum_{j=1}^{N/2+1}\Vert(E-P_{h}E)'\Vert_{I_{j}}^{2}\vert\le C\varepsilon^{2\sigma+1}N^{2}+CN^{-2\sigma},
\end{aligned}
\end{equation*}
where \eqref{eq:mesh-3} and \eqref{eq:mesh-4} have been used.

For $\Lambda_{2}$, the triangle inequality and \eqref{eq:interpolation-theory-1} can yield
\begin{equation*}
\vert\sum_{j=1}^{N/2+1} \gamma \Vert u-P_{h}u\Vert_{I_{j}}^{2}\vert\le C\sum_{j=1}^{N/2+1}\Vert S-P_{h}S\Vert_{I_{j}}^{2}+C\sum_{j=1}^{N/2+1} \Vert E-P_{h}E\Vert_{I_{j}}^{2}\le CN^{-2(k+1)}.
\end{equation*}

Recall that $\mu(x_{j})$ is defined as \eqref{eq: penalization parameters}, then one has
\begin{equation*}
\begin{aligned}
&\vert\sum_{j=1}^{N/2}\mu(x_{j})[(S-P_{h}S)(x_{j})]^{2}\vert\le C\sum_{j=1}^{N/2}\mu(x_{j})\Vert S-P_{h}S\Vert_{L^{\infty}(I_{j}\cup I_{j+1})}^{2}\le CN^{-(2k+1)},\\
&\vert\sum_{j=1}^{N/2}\mu(x_{j})[(E-P_{h}E)(x_{j})]^{2}\vert\le C\sum_{j=1}^{N/2}\mu(x_{j})\Vert E-P_{h}E\Vert_{L^{\infty}(I_{j}\cup I_{j+1})}^{2}\le C\varepsilon^{2\sigma}N.
\end{aligned}
\end{equation*}
Similarly, it is easy to get
\begin{equation*}
\Lambda_{4}\le CN^{-(2k+1)}.
\end{equation*}
Here $b(x)$ is a smooth function on $[0, 1]$, thus it is bounded. Finally, by some simple calculations, \eqref{post-process-2} can be obtained.
\end{proof}
\begin{theorem}\label{QQQQ-1}
Suppose that Assumption \ref{ass:S-1} and $\mu(x_{i})$ is defined as \eqref{eq: penalization parameters}. On Bakhvalov-type mesh \eqref{eq:Bakhvalov-type mesh-Roos} with $\sigma\ge k+1$, 
\begin{equation*}
\Vert u-\Pi u\Vert_{NIPG}\le CN^{-k},
\end{equation*}
where $u$ is the solution of \eqref{eq:S-1}, and $\Pi u$ is the interpolation defined in \eqref{eq:H-1}.
\end{theorem}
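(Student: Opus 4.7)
The plan is to split $\Vert u-\Pi u\Vert_{NIPG}^{2}$ along the interface point $x_{N/2+1}$ so the two already-proved restricted estimates \eqref{post-process-1} and \eqref{post-process-2} can be combined directly. Since $\Pi u$ equals $P_{h}u$ on $I_{1},\dots,I_{N/2+1}$ and $L_{k}u$ on $I_{N/2+2},\dots,I_{N}$, the element contributions $\varepsilon\Vert(u-\Pi u)'\Vert_{I_{j}}^{2}+\gamma\Vert u-\Pi u\Vert_{I_{j}}^{2}$ immediately split into a Gau\ss\ Radau block over $[0,x_{N/2+1}]$ and a Gau\ss\ Lobatto block over $[x_{N/2+1},1]$, each of which is packaged inside the sub-norms used in the previous lemma.

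The delicate part is the jump sum $\sum_{j=0}^{N}(\mu(x_{j})+\tfrac12 b(x_{j}))[(u-\Pi u)(x_{j})]^{2}$. First I would note that Gau\ss\ Lobatto interpolation reproduces $u$ at every element endpoint, so all jumps at $x_{N/2+2},\dots,x_{N-1}$ vanish, and at $x_{N}$ the one-sided boundary jump $[(u-\Pi u)(x_{N})]=(u-L_{k}u)(x_{N}^{-})=0$ as well. The jumps at $x_{0},x_{1},\dots,x_{N/2}$ are exactly the ones already absorbed by \eqref{post-process-2}. This leaves only the interface node $x_{N/2+1}$, which is precisely the node the composite interpolation was designed for: Remark \ref{special} gives $[(u-\Pi u)(x_{N/2+1})]=0$, because the Gau\ss\ Radau trace on the left and the Gau\ss\ Lobatto trace on the right both reproduce $u(x_{N/2+1})$ exactly.

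With every jump accounted for, I would write
\begin{equation*}
\Vert u-\Pi u\Vert_{NIPG}^{2}\le \Vert u-P_{h}u\Vert_{NIPG,[0,x_{N/2+1}]}^{2}+\Vert u-L_{k}u\Vert_{NIPG,[x_{N/2+2},1]}^{2},
\end{equation*}
and then substitute $CN^{-(2k+1)}$ from \eqref{post-process-2} and $CN^{-2k}$ from \eqref{post-process-1}. The coarse-mesh contribution dominates, and taking square roots yields $\Vert u-\Pi u\Vert_{NIPG}\le CN^{-k}$.

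The real work for this theorem has already been done inside the preceding lemma; at this stage the only step that needs any care is verifying that no jump term is silently dropped, and that is exactly what the placement of Gau\ss\ Radau interpolation on the left and Gau\ss\ Lobatto interpolation on the right of the transition point was engineered to guarantee.
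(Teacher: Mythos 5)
Your proposal is correct and takes essentially the same route as the paper, whose proof simply combines \eqref{post-process-1} and \eqref{post-process-2}; you merely spell out the details the paper leaves implicit, namely the element-wise splitting at $x_{N/2+1}$ and the verification that every jump term is either contained in one of the two sub-norms or vanishes (Gau{\ss} Lobatto reproduction at nodes, and Remark \ref{special} at the interface). One cosmetic slip: the dominant $CN^{-2k}$ contribution from \eqref{post-process-1} comes from the Gau{\ss} Lobatto block on the \emph{fine} layer mesh $[x_{N/2+1},1]$, not from the coarse-mesh region, though this does not affect the argument.
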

\begin{proof}
From \eqref{post-process-1} and \eqref{post-process-2}, this theorem can be derived easily.
\end{proof}

\begin{lemma}\label{trace inequality}
Suppose that $z\in H^{1}(I_{j}),\quad j=1, 2, \cdots, N$, then 
\begin{equation*}
\vert z(x_{s})\vert^{2}\le 2\left(h_{j}^{-1}\Vert z\Vert_{I_{j}}^{2}+\Vert z\Vert_{I_{j}}\Vert z'\Vert_{I_{j}}\right),\quad s\in\{j-1, j\}.
\end{equation*}
\end{lemma}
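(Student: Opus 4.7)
The plan is to reduce the pointwise evaluation at the endpoint $x_s$ to an integral over the whole interval $I_j$ via the fundamental theorem of calculus, then average out the auxiliary parameter by integrating over $I_j$ and clean up with Cauchy--Schwarz. Since $z \in H^1(I_j)$ in one space dimension, $z$ is continuous on $\overline{I_j}$ and $z^2$ is absolutely continuous, so the manipulations below are justified.

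Concretely, I would first observe that for every $t \in I_j$ and either endpoint $x_s$ with $s\in\{j-1,j\}$,
\begin{equation*}
z(x_s)^{2} = z(t)^{2} + \int_{t}^{x_s} (z^{2})'(\xi)\,\mr{d}\xi = z(t)^{2} + 2\int_{t}^{x_s} z(\xi)\,z'(\xi)\,\mr{d}\xi.
\end{equation*}
Taking absolute values and enlarging the integration domain to all of $I_j$, one obtains
\begin{equation*}
z(x_s)^{2} \le z(t)^{2} + 2\int_{I_j} \vert z(\xi)\,z'(\xi)\vert\,\mr{d}\xi.
\end{equation*}
I would then integrate this inequality with respect to $t$ over $I_j$; the left-hand side becomes $h_j\,z(x_s)^{2}$, while the first term on the right becomes $\Vert z\Vert_{I_j}^{2}$ and the second term is $h_j$ times the boundary integral. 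Dividing by $h_j$ yields
\begin{equation*}
z(x_s)^{2} \le h_j^{-1}\Vert z\Vert_{I_j}^{2} + 2\int_{I_j}\vert z(\xi)\,z'(\xi)\vert\,\mr{d}\xi.
\end{equation*}

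To finish, I would apply the Cauchy--Schwarz inequality to estimate the remaining integral by $\Vert z\Vert_{I_j}\Vert z'\Vert_{I_j}$, and then absorb the factor in front of $\Vert z\Vert_{I_j}^{2}$ into a common constant by using the trivial bound $h_j^{-1}\Vert z\Vert_{I_j}^{2} \le 2h_j^{-1}\Vert z\Vert_{I_j}^{2}$. This produces exactly the stated factor of $2$ in front of both summands. There is no real obstacle here: the only subtlety is recalling that $H^1(I_j) \hookrightarrow C(\overline{I_j})$ in one dimension so that $z(x_s)$ is well defined and the fundamental theorem of calculus applies.
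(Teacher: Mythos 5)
Your proof is correct: the averaged fundamental-theorem-of-calculus argument combined with Cauchy--Schwarz gives $z(x_s)^2\le h_j^{-1}\Vert z\Vert_{I_j}^2+2\Vert z\Vert_{I_j}\Vert z'\Vert_{I_j}$, which is in fact slightly sharper than the stated bound and implies it after relaxing the first term by a factor $2$. The paper itself offers no proof but simply cites \cite{Zhu1Tan2Yin3:2015-H}, where essentially this same standard trace argument is used, so your write-up is a faithful (and self-contained) version of the intended reasoning.
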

\begin{proof}
The corresponding arguments can be found in \cite{Zhu1Tan2Yin3:2015-H}.
\end{proof}

In particular, we simplify $(\Pi u-u)(x)$ to $\eta(x)$ in the following.
\begin{lemma}
Assume that $\varepsilon\le CN^{-1}$ and on the mesh \eqref{eq:Bakhvalov-type mesh-Roos} with $\sigma\ge k+1$, there is
\begin{equation}\label{eq:KK-1}
\{\eta'(x_{j})\}^{2}\le
\left\{
\begin{aligned}
&CN^{-2k}+C\varepsilon^{2\sigma-2}+C\varepsilon^{\sigma-\frac{3}{2}}N^{-(k+\frac{1}{2})},\quad 0\le j\le N/2-1,\\
&C\varepsilon^{-1}N^{-(2k+1)}+C\varepsilon^{-2}N^{-2\sigma}+C\varepsilon^{-\frac{3}{2}}N^{-(\sigma+k+\frac{1}{2})},\quad j=N/2,\\
&C\varepsilon^{-2}N^{-2k},\quad N/2+1\le j\le N.
\end{aligned}
\right.
\end{equation}
\end{lemma}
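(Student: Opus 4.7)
The plan is to decompose $\eta=\Pi u-u=\eta_S+\eta_E$ with $\eta_S=\Pi S-S$ and $\eta_E=\Pi E-E$, to bound $|\eta_S'(x_j^{\pm})|$ and $|\eta_E'(x_j^{\pm})|$ separately in $L^{\infty}$ on the corresponding adjacent element $I_j$ or $I_{j+1}$, and then to combine via $(a+b)^{2}\le 2a^{2}+2b^{2}$. The geometric-mean type term $C\varepsilon^{\sigma-3/2}N^{-(k+1/2)}$ in \eqref{eq:KK-1} (and its analogues for $j=N/2$ and for $j\ge N/2+1$) is the product of the square roots of the two pure contributions, up to an $(N\varepsilon)^{\pm 1/2}$ factor that Assumption~\ref{ass:S-1} controls; it is therefore looser than the arithmetic mean of those two contributions and will follow for free, without separate estimation.

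For $0\le j\le N/2-1$, both adjacent intervals lie in the coarse region $[0,\tau]$ where $\Pi u=P_{h}u$ and $h\sim N^{-1}$ by \eqref{eq:mesh-4}. Since $e^{-\alpha(1-x)/\varepsilon}\le \varepsilon^{\sigma}$ for $x\le\tau$, the a priori bound \eqref{eq:decomposition} yields $\Vert E^{(l)}\Vert_{L^{\infty}(I)}\le C\varepsilon^{\sigma-l}$ on such $I$. A standard $L^{\infty}$ Lagrange estimate gives $\Vert\eta_S'\Vert_{L^{\infty}(I)}\le Ch^{k}\Vert S^{(k+1)}\Vert_{L^{\infty}(I)}\le CN^{-k}$. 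For $\eta_E'$ I use the triangle inequality together with an inverse inequality and the $L^{\infty}$-stability of the Gau{\ss} Radau projection: $\Vert\eta_E'\Vert_{L^{\infty}(I)}\le \Vert E'\Vert_{L^{\infty}(I)}+\Vert (P_{h}E)'\Vert_{L^{\infty}(I)}\le C\varepsilon^{\sigma-1}+Ch^{-1}\Vert E\Vert_{L^{\infty}(I)}\le C\varepsilon^{\sigma-1}+CN\varepsilon^{\sigma}\le C\varepsilon^{\sigma-1}$, where the last step uses Assumption~\ref{ass:S-1}. Squaring and summing yields the first line of \eqref{eq:KK-1}.

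For $j=N/2$, the left-side element $I_{N/2}$ is still coarse, so the analysis above gives $|\eta'(x_{N/2}^{-})|^{2}\le CN^{-2k}+C\varepsilon^{2\sigma-2}$, which is dominated by $C\varepsilon^{-1}N^{-(2k+1)}+C\varepsilon^{-2}N^{-2\sigma}$ using $\varepsilon\le CN^{-1}$ twice. The right-side element $I_{N/2+1}$ has length $h=h_{N/2+1}\in[C\varepsilon,CN^{-1}]$ by \eqref{eq:mesh-3}, and both bounds are needed. The smooth part yields $\Vert\eta_S'\Vert_{L^{\infty}(I_{N/2+1})}\le Ch^{k}\le CN^{-k}$, whose square is absorbed into $C\varepsilon^{-1}N^{-(2k+1)}$. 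For the layer part, $|E(x_{N/2+1})|\le CN^{-\sigma}$ and the fact that $|E^{(l)}(x)|\le C\varepsilon^{-l}e^{-\alpha(1-x)/\varepsilon}$ is maximized at the right endpoint of $I_{N/2+1}$ give $\Vert E^{(l)}\Vert_{L^{\infty}(I_{N/2+1})}\le C\varepsilon^{-l}N^{-\sigma}$, and the inverse inequality together with $L^{\infty}$-stability of $P_{h}$ gives $\Vert (P_{h}E)'\Vert_{L^{\infty}(I_{N/2+1})}\le Ch^{-1}\Vert E\Vert_{L^{\infty}(I_{N/2+1})}\le C\varepsilon^{-1}N^{-\sigma}$, using $h^{-1}\le C\varepsilon^{-1}$. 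Squaring the sum $\Vert E'\Vert_{L^{\infty}}+\Vert (P_{h}E)'\Vert_{L^{\infty}}$ produces $|\eta_E'(x_{N/2}^{+})|^{2}\le C\varepsilon^{-2}N^{-2\sigma}$.

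For $N/2+1\le j\le N$, the adjacent intervals use $L_{k}u$ (with $I_{N/2+1}$ still using $P_{h}u$, but its contribution at $\eta'(x_{N/2+1}^{-})$ is already dominated by the $j=N/2$ estimate). Standard $L^{\infty}$ Lagrange interpolation gives $\Vert\eta_E'\Vert_{L^{\infty}(I_j)}\le Ch_j^{k}\Vert E^{(k+1)}\Vert_{L^{\infty}(I_j)}\le Ch_j^{k}\varepsilon^{-(k+1)}e^{-\alpha(1-x_j)/\varepsilon}$, and \eqref{eq:mesh-7} with $\lambda=k$ yields $\Vert\eta_E'\Vert_{L^{\infty}(I_j)}\le C\varepsilon^{-1}N^{-k}$, so $|\eta_E'(x_j^{\pm})|^{2}\le C\varepsilon^{-2}N^{-2k}$; the smooth contribution $|\eta_S'(x_j^{\pm})|^{2}\le Ch_j^{2k}\le C\varepsilon^{2k}\le C\varepsilon^{-2}N^{-2k}$ by $\varepsilon\le CN^{-1}$ is absorbed. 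The main obstacle is the $j=N/2$ case: the interval $I_{N/2+1}$ sits at the transition and its length varies by a factor of $(N\varepsilon)^{-1}$ over the admissible range, and both bounds on $h_{N/2+1}$ have to be used simultaneously (one producing $\varepsilon^{-1}$ from $h^{-1}$, the other producing $N^{-k}$ from $h^{k}$) in order to match the three-term bound in \eqref{eq:KK-1}.
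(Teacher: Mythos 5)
Your proposal is correct in substance, but it takes a genuinely different route from the paper. The paper applies the trace inequality of Lemma \ref{trace inequality} with $z=\eta'$, and therefore has to estimate both $\Vert\eta'\Vert_{I_j}$ and $\Vert\eta''\Vert_{I_j}$ in $L^{2}$ on the two adjacent elements; the mixed terms $C\varepsilon^{\sigma-\frac{3}{2}}N^{-(k+\frac{1}{2})}$ and $C\varepsilon^{-\frac{3}{2}}N^{-(\sigma+k+\frac{1}{2})}$ in \eqref{eq:KK-1} are exactly the $\Vert\eta'\Vert\,\Vert\eta''\Vert$ cross terms generated by that inequality. You instead bound the one-sided traces directly by $\Vert\eta'\Vert_{L^{\infty}}$ of the adjacent element, never touching $\eta''$, and you obtain only the two ``pure'' terms in each line; since the mixed terms are nonnegative, the stated bounds follow a fortiori --- note that your justification via ``looser than the arithmetic mean'' is unnecessary (and not quite accurate), as nonnegativity of the omitted terms is all that is needed. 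Your route is shorter and in fact slightly sharper, at the price of two facts used implicitly: the $W^{1,\infty}$ approximation estimate $\Vert (S-P_{h}S)'\Vert_{L^{\infty}(I)}\le Ch^{k}\Vert S^{(k+1)}\Vert_{L^{\infty}(I)}$ (you call it a ``Lagrange'' estimate, but on $[0,x_{N/2+1}]$ the operator is the Gau{\ss} Radau projection, so this requires the standard scaling/Bramble--Hilbert argument for a projector reproducing $\mathbb{P}_{k}$, which is not contained in \eqref{eq:interpolation-theory-1}), and the local $L^{\infty}$-stability of $P_{h}$ behind $\Vert (P_{h}E)'\Vert_{L^{\infty}(I)}\le Ch^{-1}\Vert E\Vert_{L^{\infty}(I)}$; both are standard and are on the same footing as the $L^{2}$ analogues the paper itself invokes without proof in \eqref{eq: FF-1} and \eqref{eq:FF-4}. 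The remaining ingredients --- $e^{-\alpha(1-x)/\varepsilon}\le\varepsilon^{\sigma}$ on $[0,\tau]$, $\Vert E^{(l)}\Vert_{L^{\infty}(I_{N/2+1})}\le C\varepsilon^{-l}N^{-\sigma}$, the mesh bounds \eqref{eq:mesh-3}, \eqref{eq:mesh-4}, \eqref{eq:mesh-7} with $\lambda=k$, and $N\varepsilon\le C$ --- are used correctly, and your absorption arguments for $j=N/2$ (left trace dominated by $C\varepsilon^{-1}N^{-(2k+1)}+C\varepsilon^{-2}N^{-2\sigma}$) and for the trace at $x_{N/2+1}$ check out.
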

\begin{proof}
Through \cite{Zhu1Tan2Yin3:2015-H}, we can draw this conclusion.
By means of the definition of average and Lemma \ref{trace inequality},
\begin{equation*}
\begin{aligned}
\{\eta'(x_{j})\}^{2}&=\frac{1}{4}\left(\eta'(x^{-}_{j})+\eta'(x^{+}_{j})\right)^{2}\le \frac{1}{2}\left(\eta'(x^{-}_{j})^{2}+\eta'(x^{+}_{j})^{2}\right)\\
&\le h_{j}^{-1}\Vert \eta' \Vert_{I_{j}}^{2}+\Vert \eta' \Vert_{I_{j}}\Vert \eta'' \Vert_{I_{j}}+h_{j+1}^{-1}\Vert \eta' \Vert_{I_{j+1}}^{2}+\Vert \eta' \Vert_{I_{j+1}}\Vert \eta'' \Vert_{I_{j+1}}.
\end{aligned}
\end{equation*}
In the following, we will estimate $\Vert \eta' \Vert_{I_{j}}$ and $\Vert \eta''\Vert_{I_{j}}$, respectively.

For $1\le j\le N/2$, one has the following estimate,
\begin{equation}\label{eq: FF-1}
\begin{aligned}
\Vert \eta'\Vert_{I_{j}}^{2}&\le \Vert (S-P_{h} S)'\Vert^{2}_{I_{j}}+\Vert (E-P_{h} E)'\Vert_{I_{j}}^{2}\\
&\le Ch_{j}^{2k}\Vert S^{(k+1)}\Vert^{2}_{I_{j}}+\Vert E'\Vert^{2}_{I_{j}}+\Vert (P_{h} E)'\Vert^{2}_{I_{j}}\\
&\le Ch_{j}^{2k+1}\Vert S^{(k+1)}\Vert_{L^{\infty}(I_{j})}^{2}+C\varepsilon^{-2}\int_{I_{j}}e^{-2\alpha(1-x)/\varepsilon}\mr{d}x+Ch_{j}^{-2}\Vert P_{h} E\Vert_{I_{j}}^{2}\\
&\le CN^{-(2k+1)}+C\varepsilon^{-1}\varepsilon^{2\sigma}+Ch_{j}^{-1}\Vert E\Vert^{2}_{L^{\infty}(I_{j})}\\
&\le CN^{-(2k+1)}+C\varepsilon^{-1}\varepsilon^{2\sigma}+CN \varepsilon^{2\sigma}\\
&\le CN^{-(2k+1)}+C\varepsilon^{2\sigma-1},
\end{aligned}
\end{equation}
where \eqref{eq:mesh-4}, \eqref{eq:interpolation-theory-1} and the inverse inequality have been used. In a similar way, we can obtain 
\begin{equation}\label{eq: FF-2}
\Vert \eta''\Vert_{I_{j}}^{2}\le \Vert (S-P_{h} S)''\Vert^{2}_{I_{j}}+\Vert (E-P_{h} E)''\Vert_{I_{j}}^{2}\le CN^{-(2k-1)}+C\varepsilon^{2\sigma-3}.
\end{equation}
To sum up, for $j= 1, \cdots, N/2-1$, there is
\begin{equation*}
\begin{aligned}
\{\eta'(x_{j})\}^{2}&\le h_{j}^{-1}\Vert \eta' \Vert_{I_{j}}^{2}+\Vert \eta' \Vert_{I_{j}}\Vert \eta'' \Vert_{I_{j}}+h_{j+1}^{-1}\Vert \eta' \Vert_{I_{j+1}}^{2}+\Vert \eta' \Vert_{I_{j+1}}\Vert \eta'' \Vert_{I_{j+1}}\\
&\le CN^{-2k}+C\varepsilon^{2(\sigma-1)}+C\varepsilon^{\sigma-\frac{3}{2}}N^{-(k+\frac{1}{2})},
\end{aligned}
\end{equation*}
where note that $\varepsilon\le CN^{-1}$ and $\sigma\ge k+1$.

In addition, when $j=N/2+2, \cdots, N$, for one thing, from \eqref{eq:decomposition}, \eqref{eq:mesh-1}, \eqref{eq:mesh-2} and \eqref{eq:interpolation-theory},
\begin{equation*}
\Vert \eta'\Vert_{I_{j}}^{2}\le \Vert (S-L_{k} S)'\Vert^{2}_{I_{j}}+\Vert (E-L_{k} E)'\Vert_{I_{j}}^{2}\le C\varepsilon^{-1}N^{-(2k+1)}.
\end{equation*}
For another, by means of \eqref{eq:decomposition} and \eqref{eq:interpolation-theory}, one has
\begin{equation*}
\Vert \eta''\Vert_{I_{j}}^{2}\le \Vert (S-L_{k} S)''\Vert^{2}_{I_{j}}+\Vert (E-L_{k} E)''\Vert_{I_{j}}^{2}\le C\varepsilon^{-3}N^{-(2k-1)}.
\end{equation*}
Then for $j=N/2+2, \cdots, N$ we derive 
\begin{equation}\label{eq: FF-3}
\{\eta'(x_{j})\}^{2}\le C\varepsilon^{-2}N^{-2k}.
\end{equation}

Finally, we will analyze the situation when $j=N/2$ and $j=N/2+1$, respectively: On the one hand, when $j=N/2$, 
\begin{equation*}
\begin{aligned}
\{\eta'(x_{N/2})\}^{2}&\le h_{N/2}^{-1}\Vert \eta' \Vert_{I_{N/2}}^{2}+\Vert \eta' \Vert_{I_{N/2}}\Vert \eta'' \Vert_{I_{N/2}}\\
&+h_{N/2+1}^{-1}\Vert \eta' \Vert_{I_{N/2+1}}^{2}+\Vert \eta' \Vert_{I_{N/2+1}}\Vert \eta'' \Vert_{I_{N/2+1}}.
\end{aligned}
\end{equation*}
\eqref{eq:decomposition}, \eqref{eq:mesh-3}, \eqref{eq:interpolation-theory-1} and the inverse inequality \citep[ Theorem 3.2.6]{Cia1:2002-motified} yield
\begin{equation}\label{eq:FF-4}
\begin{aligned}
\Vert\eta'\Vert_{I_{N/2+1}}^{2}&\le \Vert(S-P_{h}S)'\Vert_{I_{N/2+1}}^{2}+\Vert(E-P_{h}E)'\Vert_{I_{N/2+1}}^{2}\\
&\le Ch_{N/2+1}^{2k}\Vert S^{(k+1)}\Vert_{I_{N/2+1}}^{2}+\Vert E'\Vert_{I_{N/2+1}}^{2}+\Vert (P_{h}E)'\Vert_{I_{N/2+1}}^{2}\\
&\le Ch_{N/2+1}^{2k+1}\Vert S^{(k+1)}\Vert_{L^{\infty}(I_{N/2+1})}^{2}+C\varepsilon^{-1}N^{-2\sigma}+Ch_{N/2+1}^{-1}\Vert E\Vert_{L^{\infty}(I_{N/2+1})}^{2}\\
&\le CN^{-2(k+\frac{1}{2})}+C\varepsilon^{-1}N^{-2\sigma}+C\varepsilon^{-1}N^{-2\sigma}\\
&\le CN^{-(2k+1)}+C\varepsilon^{-1}N^{-2\sigma}.
\end{aligned}
\end{equation}
In a same way, we have
\begin{equation}\label{eq:FF-5}
\Vert\eta''\Vert_{I_{N/2+1}}^{2}\le CN^{-(2k-1)}+C\varepsilon^{-3}N^{-2\sigma}.
\end{equation}
Then using \eqref{eq: FF-1}, \eqref{eq: FF-2}, \eqref{eq:FF-4} and \eqref{eq:FF-5}, it is straightforward to derive 
\begin{equation*}
\{\eta(x_{N/2})\}^{2}\le C\varepsilon^{-1}N^{-(2k+1)}+C\varepsilon^{-2}N^{-2\sigma}+C\varepsilon^{-\frac{3}{2}}N^{-(\sigma+k+\frac{1}{2})}.
\end{equation*}
On the other hand, when $j=N/2+1$,
from \eqref{eq: FF-3}, \eqref{eq:FF-4} and \eqref{eq:FF-5}, there is
\begin{equation*}
\{\eta'(x_{N/2+1})\}^{2}\le C\varepsilon^{-2}N^{-2k}.
\end{equation*}
So far, we have proved this conclusion.
\end{proof}

\section{Supercloseness}
Now introduce $\xi:=\Pi u-u_{N}$ and recall $\eta: =\Pi u-u$.
According to \eqref{eq:coercity} and the Galerkin orthogonality, 
\begin{equation}\label{eq:uniform-convergence-1}
\begin{split}
&\Vert \xi \Vert_{NIPG}^2 \le B(\xi,\xi)=B(\Pi u-u+u-u_{N},\xi) =B(\eta,\xi)\\
& =\sum_{j=1}^{N}\int_{I_{j}}\varepsilon \eta'\xi'\mr{d}x+\varepsilon \sum_{j=0}^{N}[\eta(x_{j})]\{\xi'(x_{j})\}-\varepsilon \sum_{j=0}^{N}\{\eta'(x_{j})\}[\xi(x_{j})]\\
&+\sum_{j=0}^{N}\mu(x_{j})[\eta(x_{j})][\xi(x_{j})]+\sum_{j=1}^{N}\int_{I_{j}}b(x)\eta'\xi\mr{d}x-\sum_{j=0}^{N-1}b(x_{j})[\eta(x_{j})]\xi(x_{j}^{+})\\
&+ \sum_{j=1}^{N}\int_{I_{j}}c(x)\eta\xi\mr{d}x\\
&=\sum_{j=1}^{N}\int_{I_{j}}\varepsilon \eta'\xi'\mr{d}x+\varepsilon \sum_{j=0}^{N}[\eta(x_{j})]\{\xi'(x_{j})\}-\varepsilon \sum_{j=0}^{N}\{\eta'(x_{j})\}[\xi(x_{j})]\\
&+\sum_{j=0}^{N}\mu(x_{j})[\eta(x_{j})][\xi(x_{j})]-\sum_{j=1}^{N}\int_{I_{j}}b(x) \eta\xi'\mr{d}x+\sum_{j=1}^{N}b(x_{j})[\xi(x_{j})]\eta(x_{j}^{-})\\
&+\sum_{j=1}^{N}\int_{I_{j}}\left(c(x)-b'(x)\right)\eta\xi\mr{d}x\\
& =:\Gamma_{1}+\Gamma_{2}+\Gamma_{3}+\Gamma_{4}+\Gamma_{5}+\Gamma_{6}+\Gamma_{7}.
\end{split}
\end{equation}
In the sequel, the terms on the right-hand side of \eqref{eq:uniform-convergence-1} are estimated. Firstly, to facilitate analysis, $\Gamma_{1}$ is decomposed as
\begin{equation*}
\Gamma_{1}=\sum_{j=1}^{N/2+1}\int_{I_{j}}\varepsilon \eta'\xi'\mr{d}x+\sum_{j=N/2+2}^{N}\int_{I_{j}}\varepsilon \eta'\xi'\mr{d}x.
\end{equation*}
For $1\le j\le N/2+1$, according to \eqref{eq: QQ-2}, H\"{o}lder inequalities and \eqref{eq:QQ-3}, there is
\begin{equation}\label{eq:convergence-1-1}
\begin{aligned}
\vert\sum_{j=1}^{N/2+1}\int_{I_{j}}\varepsilon \eta'\xi'\mr{d}x\vert&\le C\left(\sum_{j=1}^{N/2+1}\varepsilon\Vert\eta'\Vert^{2}_{I_{j}}\right)^{\frac{1}{2}}\left(\sum_{j=1}^{N/2+1}\varepsilon\Vert\xi'\Vert^{2}_{I_{j}}\right)^{\frac{1}{2}}\\
&\le C\left(\varepsilon^{\frac{1}{2}}N^{-k}+N^{-\sigma}\right)\Vert\xi\Vert_{NIPG}.
\end{aligned}
\end{equation}
In addition, for $j=N/2+2, \cdots, N$, from H\"{o}lder inequalities and \eqref{eq: QQ-2}, 
\begin{equation}\label{eq:convergence-1-2}
\vert\sum_{j=N/2+2}^{N}\int_{I_{j}}\varepsilon (S-L_{k}S)'\xi'\mr{d}x\vert
\le C\varepsilon^{\frac{1}{2}}N^{-k}\Vert\xi\Vert_{NIPG}.
\end{equation}
Through \eqref{eq:mesh-1}, \eqref{eq:mesh-2}, \eqref{eq:mesh-7}, \eqref{eq: Gauss-Lobatto-1} and note $\sigma\ge k+1$, some direct calculations show that
\begin{equation}\label{eq:convergence-1-3}
\begin{aligned}
&\vert\sum_{j=N/2+2}^{N}\int_{I_{j}}\varepsilon(E-L_{k} E)'\xi'\mr{d}x\vert\\
&\le C\sum_{j=N/2+2}^{N}\varepsilon h_{j}^{k+1}\Vert E^{(k+2)}\Vert_{I_{j}}\Vert\xi'\Vert_{I_{j}}\\
&\le C\left(\sum_{j=N/2+2}^{N}\varepsilon h_{j}h_{j}^{2(k+1)}\Vert E^{(k+2)}\Vert^{2}_{L^{\infty}(I_{j})}\right)^{\frac{1}{2}}\left(\sum_{j=N/2+2}^{N}\varepsilon\Vert\xi'\Vert^{2}_{I_{j}}\right)^{\frac{1}{2}}\\
&\le C\left(\sum_{j=N/2+2}^{N}\varepsilon h_{j}h_{j}^{2(k+1)}e^{-2\alpha(1-x_{j})/\varepsilon}\varepsilon^{-2(k+2)}\right)^{\frac{1}{2}}\Vert\xi\Vert_{NIPG}\\
&\le \left(C\sum_{j=N/2+2}^{N}\varepsilon^{-1} h_{j}N^{-2(k+1)}\right)^{\frac{1}{2}}\Vert\xi\Vert_{NIPG}\\
&\le CN^{-(k+\frac{1}{2})}\Vert\xi\Vert_{NIPG}.
\end{aligned}
\end{equation}
From \eqref{eq:convergence-1-1}, \eqref{eq:convergence-1-2}, \eqref{eq:convergence-1-3}, $\varepsilon\le CN^{-1}$ and $\sigma\ge k+1$, we have the following estimate,
\begin{equation}\label{eq:convergence-1}
\Gamma_{1}\le CN^{-(k+\frac{1}{2})}\Vert\xi\Vert_{NIPG}.
\end{equation}

For $\Gamma_{2}$, by using \eqref{eq:KK-1} and the definition of $\mu(x_{j})$, one has
\begin{equation}\label{convergence-2}
\begin{aligned}
\vert\Gamma_{2}\vert
&=\vert\varepsilon \sum_{j=0}^{N}\{\eta'(x_{j})\}[\xi(x_{j})]\vert\\
&\le \left(\sum_{j=0}^{N}\frac{\varepsilon^{2}}{\mu(x_{j})}\{\eta'(x_{j})\}^{2}\right)^{\frac{1}{2}}\left(\sum_{j=0}^{N}\mu(x_{j})[\xi(x_{j})]^{2}\right)^{\frac{1}{2}}\\
&\le \left(\sum_{j=0}^{N/2-1}\frac{\varepsilon^{2}}{\mu(x_{j})}\{\eta'(x_{j})\}^{2}+\frac{\varepsilon^{2}}{\mu(x_{N/2})}\{\eta'(x_{N/2})\}^{2}+\sum_{i=N/2+1}^{N}\frac{\varepsilon^{2}}{\mu(x_{j})}\{\eta'(x_{j})\}^{2}\right)^{\frac{1}{2}}\Vert \xi\Vert_{NIPG}\\
&\le CN^{-(k+\frac{1}{2})}\Vert \xi\Vert_{NIPG}.
\end{aligned}
\end{equation}

Now we decompose $\Gamma_{3}$ into two parts, that is
\begin{equation*}
\Gamma_{3}=-\varepsilon \sum_{j=0}^{N/2}\{\xi'(x_{j})\}[\eta(x_{j})]-\varepsilon \sum_{j=N/2+1}^{N}\{\xi'(x_{j})\}[\eta(x_{j})].
\end{equation*}
According to \eqref{eq:H-1}, the definitions of Gau{\ss} Lobatto interpolation and Remark \ref{special}, we just estimate $-\varepsilon \sum_{j=0}^{N/2}\{\xi'(x_{j})\}[\eta(x_{j})]$. From the inverse inequality and \eqref{eq:QQ-4},
\begin{equation}\label{convergence-3}
\begin{aligned}
\vert-\varepsilon \sum_{j=0}^{N/2}\{\xi'(x_{j})\}[\eta(x_{j})]\vert&\le \vert\varepsilon \{\xi'(x_{0})\}[\eta(x_{0})]\vert +\vert\varepsilon \sum_{j=1}^{N/2}\{\xi'(x_{j})\}[\eta(x_{j})]\vert\\
&\le \left(C\varepsilon^{\frac{1}{2}}N^{-k}+CN^{-(k+1)}\right)\Vert\xi\Vert_{NIPG},
\end{aligned}
\end{equation}
where the following estimate holds. More specifically,
\begin{equation*}
\begin{aligned}
&\vert\varepsilon \sum_{j=1}^{N/2}[\eta(x_{j})]\{\xi'(x_{j})\}\vert\\
&\le \vert\varepsilon \sum_{j=1}^{N/2-1}[\eta(x_{j})]\{\xi'(x_{j})\}\vert+\vert\varepsilon [\eta(x_{N/2})]\{\xi'(x_{N/2})\}\vert\\
&\le C\varepsilon \sum_{j=1}^{N/2-1}\Vert \eta\Vert_{L^{\infty}(I_{j}\cup I_{j+1})}\Vert \xi'\Vert_{L^{\infty}(I_{j}\cup I_{j+1})}+C\varepsilon\Vert \eta\Vert_{L^{\infty}(I_{N/2}\cup I_{N/2+1})}\Vert \xi'\Vert_{L^{\infty}(I_{N/2}\cup I_{N/2+1})}\\
&\le C\varepsilon \Vert\eta\Vert_{L^{\infty}(I_{j}\cup I_{j+1})}N^{\frac{1}{2}}\sum_{j=1}^{N/2-1}\Vert\xi'\Vert_{I_{j}\cup I_{j+1}}+C\varepsilon\Vert \eta\Vert_{L^{\infty}(I_{N/2}\cup I_{N/2+1})}\varepsilon^{-\frac{1}{2}}\Vert \xi'\Vert_{I_{N/2}\cup I_{N/2+1}}\\
&\le C\left(\varepsilon^{\frac{1}{2}}N^{-k}+N^{-(k+1)}\right)\Vert\xi\Vert_{NIPG},
\end{aligned}
\end{equation*}
where \eqref{eq:mesh-3} and \eqref{eq:mesh-4} have been used.

Then, divide $\Gamma_{4}$ into the following two parts:
\begin{equation*}
\sum_{j=0}^{N}\mu(x_{j})[\eta(x_{j})][\xi(x_{j})]=\sum_{j=0}^{N/2}\mu(x_{j})[\eta(x_{j})][\xi(x_{j})]+\sum_{j=N/2+1}^{N}\mu(x_{j})[\eta(x_{j})][\xi(x_{j})].
\end{equation*} 
According to \eqref{eq:H-1} and Remark \ref{special}, there is $[\eta(x_{j})]=0, j= N/2+1, \cdots, N$. That is to say, we just analyze the first item. Then from \eqref{eq:QQ-4}, 
\begin{equation}\label{convergence-4}
\begin{aligned}
&\vert\sum_{j=0}^{N/2}\mu(x_{j})[\eta(x_{j})][\xi(x_{j})]\vert\\
&\le \left(\sum_{j=0}^{N/2}\mu(x_{j})[\eta(x_{j})]^{2}\right)^{\frac{1}{2}}\left(\sum_{j=0}^{N/2}\mu(x_{j})[\xi(x_{j})]^{2}\right)^{\frac{1}{2}}\\
&\le C \left(\mu(x_{0})\Vert\eta\Vert_{L^{\infty}(I_{1})}^{2}+\sum_{j=1}^{N/2}\mu(x_{j})\Vert \eta\Vert_{L^{\infty}(I_{j}\cup I_{j+1})}^{2}\right)^{\frac{1}{2}}\Vert\xi\Vert_{NIPG}\\
&\le C\left(N^{-2(k+1)}+ N^{-(2k+1)}\right)^{\frac{1}{2}}\Vert \xi\Vert_{NIPG}\\
&\le CN^{-(k+\frac{1}{2})}\Vert\xi\Vert_{NIPG}.
\end{aligned}
\end{equation}

Now we analyze $\Gamma_{5}$ and $\Gamma_{6}$, which are also divided into two parts $1\le j\le N/2+1$ and $N/2+2\le j\le N$. For $1\le j\le N/2+1$, through Remark \ref{special}, \eqref{eq:J-1}, \eqref{eq:J-2}, and assume that $b(x_{j-\frac{1}{2}})$ is the value of $b(x)$ at the midpoint $x_{j-\frac{1}{2}}$ in the interval $I_{j}$, then 
\begin{equation*}
\begin{aligned}
&-\sum_{j=1}^{N/2+1}\int_{I_{j}}b(x) \eta\xi'\mr{d}x-\sum_{j=1}^{N/2+1}b(x_{j})[\xi(x_{j})]\eta(x_{j}^{-})\\
&=-\sum_{j=1}^{N/2+1}\int_{I_{j}}\left(b(x)-b(x_{j-\frac{1}{2}})\right)\eta\xi'\mr{d}x-\sum_{j=1}^{N/2+1}\int_{I_{j}}b(x_{j-\frac{1}{2}})\eta\xi'\mr{d}x-\sum_{j=1}^{N/2+1}b(x_{j})\eta(x_{j}^{-})[\xi(x_{j})]\\
&=-\sum_{j=1}^{N/2+1}\int_{I_{j}}\left(b(x)-b(x_{j-\frac{1}{2}})\right)\eta\xi'\mr{d}x.
\end{aligned}
\end{equation*}
According to the mean value theorem, there is $\xi$ between $x_{j-\frac{1}{2}}$ and $x$ to satisfy
\begin{equation*}
b(x)-b(x_{j-\frac{1}{2}})=b'(\xi)(x-x_{j-\frac{1}{2}}).
\end{equation*}
Note that in this paper $b(x)$ is a smooth function. From the inverse inequality, \eqref{eq:QQ-4} and the Cauchy Schwartz inequality, we have
\begin{equation*}
\begin{aligned}
&\vert-\sum_{j=1}^{N/2+1}\int_{I_{j}}\left(b(x)-b(x_{j-\frac{1}{2}})\right)\eta\xi'\mr{d}x\vert=\vert-\sum_{j=1}^{N/2+1}\int_{I_{j}}b'(\xi)(x-x_{j-\frac{1}{2}})\eta\xi'\mr{d}x\vert\\
&\le C\sum_{j=1}^{N/2+1}h_{j}\Vert \eta\Vert_{L^{\infty}(I_{j})}\Vert\xi'\Vert_{L^{1}(I_{j})}\le C\sum_{j=1}^{N/2+1}h_{j}\Vert \eta\Vert_{L^{\infty}(I_{j})}h_{j}^{-\frac{1}{2}}\Vert\xi\Vert_{I_{j}}\\
&\le C\sum_{j=1}^{N/2+1}N^{-\frac{1}{2}}\Vert\eta\Vert_{L^{\infty}(I_{j})}\Vert\xi\Vert_{I_{j}}\\
&\le CN^{-(k+\frac{3}{2})}\left(\sum_{j=1}^{N/2+1}1^{2}\right)^{\frac{1}{2}}\left(\sum_{j=1}^{N/2+1}\Vert\xi\Vert^{2}_{I_{j}}\right)^{\frac{1}{2}}\\
&\le CN^{-(k+1)}\Vert \xi\Vert_{NIPG}.
\end{aligned}
\end{equation*}

For $N/2+2\le j\le N$, we need to consider the following formula, 
\begin{equation*}
\begin{aligned}
&-\sum_{j=N/2+2}^{N}\int_{I_{j}}b(x) \eta\xi'\mr{d}x-\sum_{j=N/2+2}^{N}b(x_{j})\eta(x_{j}^{-})[\xi(x_{j})]\\
&=-\sum_{j=N/2+2}^{N}\int_{I_{j}}b(x) (S-L_{k}S)\xi'\mr{d}x-\sum_{j=N/2+2}^{N}\int_{I_{j}}b(x) (E-L_{k}E)\xi'\mr{d}x\\&-\sum_{j=N/2+2}^{N}b(x_{j})\eta(x_{j}^{-})[\xi(x_{j})].
\end{aligned}
\end{equation*}
From the H\"{o}lder inequality and the inverse inequality, 
\begin{equation*}
\begin{aligned}
&\vert-\sum_{j=N/2+2}^{N}\int_{I_{j}}b(x)(S-L_{k}S)\xi'\mr{d}x\vert\le C\sum_{j=N/2+2}^{N}\Vert S-L_{k}S\Vert_{L^{\infty}(I_{j})}\Vert\xi'\Vert_{L^{1}(I_{j})}\\
&\le C\Vert S-L_{k}S\Vert_{L^{\infty}(I_{j})}\sum_{j=N/2+2}^{N}h_{j}^{\frac{1}{2}}\Vert\xi'\Vert_{I_{j}}\\
&\le C\varepsilon^{k+1}\left(\sum_{j=N/2+2}^{N} 1^{2}\right)^{\frac{1}{2}}\left(\sum_{j=N/2+2}^{N}\varepsilon\Vert\xi'\Vert_{I_{j}}^{2}\right)^{\frac{1}{2}}\\
&\le C\varepsilon^{k+1}N^{\frac{1}{2}}\Vert\xi\Vert_{NIPG}.
\end{aligned}
\end{equation*}

Recall $\sigma\ge k+1$, then the inverse inequality, \eqref{eq:mesh-7} and \eqref{eq:interpolation-theory} yield
\begin{equation*}
\begin{aligned}
&\vert-\sum_{j=N/2+2}^{N}\int_{I_{j}}b(x)(E-L_{k}E)\xi'\mr{d}x\vert\le C\sum_{j=N/2+2}^{N}\Vert E-L_{k}E\Vert_{L^{\infty}(I_{j})}\Vert\xi'\Vert_{L^{1}(I_{j})}\\
&\le C\sum_{j=N/2+2}^{N} h_{j}^{k+1}\Vert E^{(k+1)}\Vert_{L^{\infty}(I_{j})}h_{j}^{\frac{1}{2}}\Vert \xi'\Vert_{I_{j}}\\
&\le C\sum_{j=N/2+2}^{N} h_{j}^{k+1}e^{-\alpha(1-x_{j})/\varepsilon}\varepsilon^{-(k+1)}h_{j}^{\frac{1}{2}}\Vert \xi'\Vert_{I_{j}}\\
&\le C\sum_{j=N/2+2}^{N}N^{-(k+1)}\varepsilon^{\frac{1}{2}}\Vert \xi'\Vert_{I_{j}}\\
&\le CN^{-(k+1)}\left(\sum_{j=N/2+2}^{N}1^{2}\right)^{\frac{1}{2}}\left(\sum_{j=N/2+2}^{N}\varepsilon \Vert \xi'\Vert_{I_{j}}^{2}\right)^{\frac{1}{2}}\\
&\le CN^{-(k+\frac{1}{2})}\Vert\xi\Vert_{NIPG}.
\end{aligned}
\end{equation*}
Besides, using \eqref{eq:QQ-4} and recall that the values of $\mu(x_{j})$ \eqref{eq: penalization parameters}, 
\begin{equation*}
\begin{aligned}
&\vert-\sum_{j=N/2+2}^{N}b(x_{j})[\xi(x_{j})]\eta(x_{j}^{-})\vert\\&\le C\left(\sum_{j=N/2+2}^{N}\mu^{-1}(x_{j})\eta(x_{j}^{-})^{2}\right)^{\frac{1}{2}}\left(\sum_{j=N/2+2}^{N}\mu(x_{j})[\xi(x_{j})]^{2}\right)^{\frac{1}{2}}\\
&\le C\left(\sum_{j=N/2+2}^{N}\mu^{-1}(x_{j})\Vert\eta\Vert^{2}_{L^{\infty}(I_{j})}\right)^{\frac{1}{2}}\Vert\xi\Vert_{NIPG}\\
&\le CN^{-(k+\frac{3}{2})}\Vert\xi\Vert_{NIPG}.
\end{aligned}
\end{equation*}
Therefore, we derive
\begin{equation}\label{convergence-5}
\Gamma_{5}+\Gamma_{6}\le CN^{-(k+\frac{1}{2})}\Vert\xi\Vert_{NIPG}
\end{equation}
without any difficulties.

For $\Gamma_{7}$, from H\"{o}lder inequalities and \eqref{eq:QQ-1}, we have
\begin{equation}\label{convergence-6}
\begin{aligned}
\Gamma_{7}\le C\Vert\eta\Vert_{[0, 1]}\Vert\xi\Vert_{NIPG}\le CN^{-(k+1)}\Vert\xi\Vert_{NIPG}.
\end{aligned}
\end{equation}

Finally, according to \eqref{eq:convergence-1}, \eqref{convergence-2}, \eqref{convergence-3}, \eqref{convergence-4}, \eqref{convergence-5} and \eqref{convergence-6}, one has
\begin{equation*}
\begin{aligned}
\Vert\xi\Vert^{2}_{NIPG}&\le\Gamma_{1}+\Gamma_{2}+\Gamma_{3}+\Gamma_{4}+\Gamma_{5}+\Gamma_{6}+\Gamma_{7}\\
&\le CN^{-(k+\frac{1}{2})}\Vert\xi\Vert_{NIPG},
\end{aligned}
\end{equation*}
which implies the following estimate holds true, that is
\begin{equation}\label{eq:MA}
\Vert \Pi u-u_{N}\Vert_{NIPG}\le CN^{-(k+\frac{1}{2})}.
\end{equation}

Now we will present the main conclusion of this paper.
\begin{theorem}\label{eq:main result2}
Suppose that Assumption \ref{ass:S-1} holds true and $\mu(x_{j})$ is defined as \eqref{eq: penalization parameters}. Then on Bakhvalov-type mesh \eqref{eq:Bakhvalov-type mesh-Roos} with $\sigma\ge k+1$, we have
\begin{align*}
\Vert L_{k}u-u_{N}\Vert_{NIPG}+\Vert \Pi u-u_{N} \Vert_{NIPG}\le CN^{-(k+\frac{1}{2})},
\end{align*}
where $\Pi u$ is the interpolation defined as \eqref{eq:H-1}, and $u_N$ is the solution of \eqref{eq:SD}. 
\end{theorem}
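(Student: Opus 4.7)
The plan is to reduce the theorem to the bound \eqref{eq:MA} (which already gives the second summand) and estimate the remaining piece $\Vert L_{k}u-\Pi u\Vert_{NIPG}$ directly. By the triangle inequality,
\begin{equation*}
\Vert L_{k}u-u_{N}\Vert_{NIPG}\le \Vert L_{k}u-\Pi u\Vert_{NIPG}+\Vert \Pi u-u_{N}\Vert_{NIPG},
\end{equation*}
so once $\Vert L_{k}u-\Pi u\Vert_{NIPG}\le CN^{-(k+\frac{1}{2})}$ is established, the theorem follows from \eqref{eq:MA}.

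To estimate $\Vert L_{k}u-\Pi u\Vert_{NIPG}$, I would exploit the piecewise definition \eqref{eq:H-1} of $\Pi$. On the layer region $[x_{N/2+1},1]$ we have $\Pi u=L_{k}u$, so $L_{k}u-\Pi u$ vanishes identically there and contributes nothing to the NIPG norm. On the coarse region $[0,x_{N/2+1}]$ we have $\Pi u=P_{h}u$, so $L_{k}u-\Pi u=L_{k}u-P_{h}u$, and another triangle inequality gives
\begin{equation*}
\Vert L_{k}u-\Pi u\Vert_{NIPG}\le \Vert L_{k}u-u\Vert_{NIPG,[0,x_{N/2+1}]}+\Vert u-P_{h}u\Vert_{NIPG,[0,x_{N/2+1}]}.
\end{equation*}
The second summand is at most $CN^{-(k+\frac{1}{2})}$ by \eqref{post-process-2}.

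It remains to bound $\Vert L_{k}u-u\Vert_{NIPG,[0,x_{N/2+1}]}$. The key observation is that, because the Gauß Lobatto nodes include both endpoints, $L_{k}u$ matches $u$ at every mesh point, so the jump $[(L_{k}u-u)(x_{j})]=0$ for $j=0,\ldots,N/2+1$ and all penalty/jump contributions in $\Vert\cdot\Vert_{NIPG}$ drop out. One is left with
\begin{equation*}
\Vert L_{k}u-u\Vert_{NIPG,[0,x_{N/2+1}]}^{2}=\varepsilon\sum_{j=1}^{N/2+1}\Vert (L_{k}u-u)'\Vert_{I_{j}}^{2}+\sum_{j=1}^{N/2+1}\gamma\Vert L_{k}u-u\Vert_{I_{j}}^{2},
\end{equation*}
which I would split via $u=S+E$. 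For $S$, the standard estimate \eqref{eq:interpolation-theory} with the coarse-mesh bound \eqref{eq:mesh-3}--\eqref{eq:mesh-4} (so $h_{j}\le CN^{-1}$) yields $\Vert L_{k}S-S\Vert_{NIPG,[0,x_{N/2+1}]}\le CN^{-k}$. For $E$, the sharp estimate $x_{N/2+1}\le 1-C\frac{\sigma\varepsilon}{\alpha}\ln N$ from \eqref{eq:mesh-5} gives $|E(x)|\le CN^{-\sigma}$ on the coarse region, and combined with \eqref{eq:decomposition} and the inverse inequality (exactly as in the estimate of $\Lambda_{1}$ in the proof of \eqref{post-process-2}) one obtains $\Vert L_{k}E-E\Vert_{NIPG,[0,x_{N/2+1}]}\le C(\varepsilon^{\sigma-\frac{1}{2}}N+N^{-\sigma})$. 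Using $\sigma\ge k+1$ and Assumption \ref{ass:S-1}, this is absorbed into $CN^{-(k+\frac{1}{2})}$.

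There is no real obstacle here; the only point requiring care is verifying that all jump terms for $L_{k}u-u$ vanish on the coarse region (including at the boundary node $x_{0}$, where $u(0)=0=L_{k}u(x_{0}^{+})$, and at $x_{N/2+1}$, where the restriction to $[0,x_{N/2+1}]$ means only the left trace of $L_{k}u$ enters the jump definition). Once that is in place, the remaining pieces are standard interpolation estimates completely analogous to those already appearing in the proofs of \eqref{post-process-1} and \eqref{post-process-2}, and the collected bound is $CN^{-(k+\frac{1}{2})}$, finishing the proof.
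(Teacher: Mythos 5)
Your proposal follows exactly the same route as the paper: triangle inequality to reduce to \eqref{eq:MA}, the observation that $L_{k}u-\Pi u$ vanishes on $[x_{N/2+1},1]$, a second triangle inequality through $u$ on the coarse region, and then \eqref{post-process-2} plus a direct estimate of $\Vert L_{k}u-u\Vert_{NIPG,[0,x_{N/2+1}]}$ (the paper compresses this last step into ``some direct calculations''). The structure, including the verification that all jump terms of $L_{k}u-u$ vanish on the coarse region, is sound.

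The one defect is in the bookkeeping of that last step: the intermediate bounds you state are true but too weak to deliver the claimed rate. You assert $\Vert L_{k}S-S\Vert_{NIPG,[0,x_{N/2+1}]}\le CN^{-k}$ and $\Vert L_{k}E-E\Vert_{NIPG,[0,x_{N/2+1}]}\le C(\varepsilon^{\sigma-\frac{1}{2}}N+N^{-\sigma})$, and then claim the collected bound is $CN^{-(k+\frac{1}{2})}$. As written this does not follow: $N^{-k}$ is short of $N^{-(k+\frac{1}{2})}$ by half an order, and with only $\varepsilon\le CN^{-1}$ and $\sigma\ge k+1$ the term $\varepsilon^{\sigma-\frac{1}{2}}N$ is bounded merely by $CN^{-(k-\frac{1}{2})}$, which cannot be absorbed into $CN^{-(k+\frac{1}{2})}$. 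The cure is simply to keep the $\varepsilon$-weighting of the derivative term in the NIPG norm: for the smooth part,
\begin{equation*}
\varepsilon\sum_{j=1}^{N/2+1}\Vert(L_{k}S-S)'\Vert_{I_{j}}^{2}\le C\varepsilon N^{-2k}\le CN^{-(2k+1)},
\end{equation*}
so the $S$-contribution is $C\bigl(\varepsilon^{\frac{1}{2}}N^{-k}+N^{-(k+1)}\bigr)\le CN^{-(k+\frac{1}{2})}$ by Assumption \ref{ass:S-1}; and the $\Lambda_{1}$-type computation you cite actually yields $\varepsilon\sum_{j=1}^{N/2+1}\Vert(L_{k}E-E)'\Vert_{I_{j}}^{2}\le C\varepsilon^{2\sigma+1}N^{2}+CN^{-2\sigma}$, i.e. the exponent is $\sigma+\frac{1}{2}$, not $\sigma-\frac{1}{2}$, and then $\varepsilon^{\sigma+\frac{1}{2}}N+N^{-\sigma}\le CN^{-(k+\frac{1}{2})}$. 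With these corrected exponents your argument closes and coincides with the paper's proof.
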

\begin{proof}
From the triangle inequality, we have
\begin{equation*}
\begin{aligned}
\Vert L_{k}u-u_{N}\Vert_{NIPG}\le \Vert L_{k}u-\Pi u\Vert_{NIPG}+\Vert \Pi u-u_{N}\Vert_{NIPG}.
\end{aligned}
\end{equation*}
According to \eqref{eq:MA}, we just estimate the bound of $\Vert L_{k}u-\Pi u\Vert_{NIPG}$.

By the definition of $\Pi u$ \eqref{eq:H-1} and the triangle inequality, 
\begin{equation*}
\begin{aligned}
\Vert L_{k}u-\Pi u\Vert_{NIPG}&=\Vert L_{k}u-L_{k}u\Vert_{NIPG, [x_{N/2+1}, 1]}+\Vert L_{k}u-P_{h}u\Vert_{NIPG, [0, x_{N/2+1}]}\\
&=\Vert L_{k}u-P_{h}u\Vert_{NIPG, [0, x_{N/2+1}]}\\
&\le \Vert L_{k}u-u\Vert_{NIPG, [0, x_{N/2+1}]}+\Vert u-P_{h}u\Vert_{NIPG, [0, x_{N/2+1}]}.
\end{aligned}
\end{equation*}
Therefore from \eqref{post-process-2} and some direct calculations, we obtain
\begin{equation*}
\Vert L_{k}u-P_{h}u\Vert_{NIPG, [0, x_{N/2+1}]}\le CN^{-(k+\frac{1}{2})}.
\end{equation*}
Note that $\mu(x_{j})=1, j=0, 1, \cdots, N/2+1$. So far, we have completed the proof.
\end{proof} 

\begin{remark}
The reason why we choose to use Gau{\ss} Radau interpolation in $[0, x_{N/2+1}]$ is that the convergence analysis of convection term in $I_{N/2+1}$ can not analyze the past by using the standard Lagrange interpolation. In short, on $[x_{N/2}, x_{N/2+1}]$, we can't get a $\varepsilon^{\frac{1}{2}}$, making $\varepsilon^{\frac{1}{2}}\Vert\xi'\Vert_{I_{N/2+1}}\le \Vert\xi\Vert_{NIPG}$. More specifically,
\begin{equation*}
\begin{aligned}
\int_{x_{N/2}}^{x_{N/2+1}}b(x)(E-E_{I})\xi'\mr{d}x&\le C\Vert E-E_{I}\Vert_{I_{N/2+1}}\Vert\xi'\Vert_{I_{N/2+1}}\le CN^{-(\sigma+\frac{1}{2})}\Vert\xi'\Vert_{I_{N/2+1}}\\&\le C\varepsilon^{-\frac{1}{2}}N^{-(\sigma+\frac{1}{2})}\Vert\xi\Vert_{NIPG},
\end{aligned}
\end{equation*}
where $E_{I}$ represents the standard Lagrange interpolation of $E$. This difficulty can be easily handled by Gau{\ss} Radau interpolation.
\end{remark}
\begin{theorem}\label{the:main result1}
Let $\mu(x_{j})$ defined in \eqref{eq: penalization parameters} and Assumption \ref{ass:S-1} hold true. Then on Bakhvalov-type mesh \eqref{eq:Bakhvalov-type mesh-Roos} with $\sigma\ge k+1$, we have
\begin{align*}
\Vert u-u_{N} \Vert_{NIPG}\le CN^{-k},
\end{align*}
where $u$ is the exact solution of \eqref{eq:S-1}, and $u_N$ is the solution of \eqref{eq:SD}. 
\end{theorem}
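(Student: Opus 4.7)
The plan is to deduce Theorem \ref{the:main result1} as an immediate corollary of the supercloseness estimate in Theorem \ref{eq:main result2} combined with the interpolation error bound in Theorem \ref{QQQQ-1}, using nothing more than the triangle inequality in the NIPG norm.

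First I would split the error as
\begin{equation*}
\Vert u-u_{N}\Vert_{NIPG}\le \Vert u-\Pi u\Vert_{NIPG}+\Vert \Pi u-u_{N}\Vert_{NIPG},
\end{equation*}
where $\Pi u$ is the composite interpolant defined in \eqref{eq:H-1}. This is legal since $\Vert\cdot\Vert_{NIPG}$ from \eqref{eq:SS-1} is a genuine norm on $H^{1}(\Omega,\mathcal{T}_N)+V_{N}^{k}$ (the jump, average and volume terms are all well defined for the exact solution $u$, which is continuous and piecewise smooth, so every jump term contributed by $u$ itself vanishes).

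Next I would invoke the two ingredients already established. Theorem \ref{QQQQ-1} gives $\Vert u-\Pi u\Vert_{NIPG}\le CN^{-k}$, which is the standard interpolation bound on the Bakhvalov-type mesh with $\sigma\ge k+1$, controlled by the boundary-layer estimate \eqref{post-process-1} inside the layer and \eqref{post-process-2} outside. The supercloseness result \eqref{eq:MA} derived in the proof of Theorem \ref{eq:main result2} gives the sharper bound $\Vert \Pi u-u_{N}\Vert_{NIPG}\le CN^{-(k+\frac{1}{2})}$ for the discrete error between the interpolant and the NIPG solution.

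Combining these two bounds yields
\begin{equation*}
\Vert u-u_{N}\Vert_{NIPG}\le CN^{-k}+CN^{-(k+\frac{1}{2})}\le CN^{-k},
\end{equation*}
which is exactly the claim. There is no real obstacle here: the only nontrivial work has already been packaged into Theorems \ref{QQQQ-1} and \ref{eq:main result2}, and the final step is purely a triangle-inequality argument where the interpolation error $CN^{-k}$ dominates and thus dictates the optimal order in the energy norm.
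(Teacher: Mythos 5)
Your proposal is correct and matches the paper's own argument exactly: the paper proves Theorem \ref{the:main result1} by combining the interpolation bound of Theorem \ref{QQQQ-1} with the supercloseness estimate \eqref{eq:MA} via the triangle inequality, just as you do. No gaps; the interpolation term $CN^{-k}$ dominates and gives the stated order.
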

\begin{proof}
Combining Theorem \ref{QQQQ-1} and \eqref{eq:MA}, we can draw this conclusion directly.
\end{proof}

\section{Numerical experiment}
In order to verify the theoretical conclusion about supercloseness, we consider the following test problem,
\begin{equation*}\label{eq:KK-2}
\left\{
\begin{aligned}
 &-\varepsilon u''(x)+(3-x)u'(x)+u(x)=f(x),\quad x\in \Omega: = (0,1),\\
&u(0)=u(1)=0.
\end{aligned}
\right.
\end{equation*}
Here $f(x)$ is chosen such that
\begin{equation*}
u(x)=x-x\cdot e^{-2(1-x)/\varepsilon}
\end{equation*}
is the exact solution of \label{eq:KK-2}.

In our numerical experiment, we first consider $\varepsilon= 10^{-5}, \cdots, 10^{-9}, k = 1, 2$ and $N =8, \cdots, 1024$. 
And on Bakhvalov-type mesh \eqref{eq:Bakhvalov-type mesh-Roos}, we set $\sigma = k+1$, $\alpha=2$ and
\begin{equation*}
\mu(x_{j})=\left\{
\begin{aligned}
&1,\quad j=0, 1, 2,\cdots, N/2,\\
&N^{2},\quad j=N/2+1, \cdots, N.
\end{aligned}
\right.
\end{equation*}
Now the corresponding convergence rate is defined as
$$r_{N}= \frac{\ln e_{N}-\ln e_{2N}}{\ln 2},$$
where for a particular $\varepsilon$, $e_{N}= \Vert L_{k}u-u_{N}\Vert_{NIPG}$ is the calculation error related to the mesh parameter $N$. 
Below, we present the following tables, which imply Theorem \ref{eq:main result2} is correct.\\
\vspace{-0.9cm}
\begin{table}[H]
\caption{$\Vert L_{k}u-u_N\Vert_{NIPG}$ for $k=1$ on Bakhvalov-type mesh}
\footnotesize
\centering
\resizebox{110mm}{25mm}{
\setlength\tabcolsep{4pt}
\begin{tabular*}{\textwidth}{@{\extracolsep{\fill}} c cccccccccccc}
\cline{1-11}
    &\multicolumn{10}{c}{$\varepsilon$ }\\
\cline{1-11}
            \multirow{2}{*}{ $N$ }   &\multicolumn{2}{c}{$10^{-5}$} &\multicolumn{2}{c}{$10^{-6}$}  &\multicolumn{2}{c}{$10^{-7}$}   
&\multicolumn{2}{c}{$10^{-8}$} &\multicolumn{2}{c}{$10^{-9}$}\\
\cline{2-11}&$e_{N}$&$r_{N}$&$e_{N}$&$r_{N}$&$e_{N}$&$r_{N}$&$e_{N}$&$r_{N}$&$e_{N}$&$r_{N}$\\
\cline{1-11}
             $8$       &0.695E-1  &1.96 &0.702E-1  &1.96 &0.706E-1  &1.96 &0.710E-1  &1.96  &0.723E-1  &1.96  \\
             $16$       &0.179E-1  &1.98  &0.180E-1  &1.98  &0.181E-1  &1.98 &0.182E-1  &1.98  &0.183E-1  &1.98  \\
             $32$       & 0.453E-2  &1.99  & 0.457E-2  &1.99 & 0.460E-2  &1.99 & 0.463E-2  &1.99 & 0.464E-2  &1.99 \\
             $64$       & 0.114E-2 &2.00  & 0.115E-2  &2.00 & 0.116E-2  &2.00  & 0.117E-2  &1.99  & 0.117E-2  &1.99  \\
             $128$      &0.285E-3  &2.00 &0.288E-3  &2.00 &0.291E-3  &2.00 &0.292E-3  &2.00 &0.294E-3  &2.00 \\
             $256$     &0.711E-4 &2.01 &0.721E-4 &2.00 &0.727E-4 &2.00 &0.732E-4 &2.00 &0.735E-4 &2.00 \\
             $512$    & 0.177E-4  & 2.01  & 0.180E-4  &2.00  & 0.182E-4  &2.00  & 0.183E-4  &2.00  & 0.184E-4 &2.00 \\
             $1024$    & 0.441E-5 &--   & 0.448E-5  &--  & 0.453E-5  &-- & 0.457E-5 &--  &0.459E-5  &--  \\

\cline{1-11}
\end{tabular*}}
\label{table:1}
\end{table}

\begin{table}[H]
\caption{$\Vert L_{k}u-u_N\Vert_{NIPG}$ for $k=2$ on Bakhvalov-type mesh}
\footnotesize
\centering
\resizebox{110mm}{25mm}{
\setlength\tabcolsep{4pt}
\begin{tabular*}{\textwidth}{@{\extracolsep{\fill}} c cccccccccc}
\cline{1-11}
    &\multicolumn{10}{c}{$\varepsilon$ }\\
\cline{1-11}
            \multirow{2}{*}{ $N$ }   &\multicolumn{2}{c}{$10^{-5}$} &\multicolumn{2}{c}{$10^{-6}$}  &\multicolumn{2}{c}{$10^{-7}$}   
&\multicolumn{2}{c}{$10^{-8}$} &\multicolumn{2}{c}{$10^{-9}$} \\

\cline{2-11}&$e_{N}$&$r_{N}$&$e_{N}$&$r_{N}$&$e_{N}$&$r_{N}$&$e_{N}$&$r_{N}$&$e_{N}$&$r_{N}$\\
\cline{1-11}
             $8$       &0.213E-1  &2.71 &0.216E-1  &2.71 &0.218E-1  &2.72  &0.220E-1  &2.72 &0.221E-1  &2.72  \\
             $16$       &0.326E-2  &2.67  &0.329E-2  &2.67 &0.332E-2  &2.68 &0.334E-2 &2.68 &0.335E-2  &2.68  \\
             $32$       & 0.512E-3  &2.62& 0.516E-3  &2.62 &0.519E-3  &2.62
&0.521E-3  &2.62  & 0.523E-3  &2.63  \\
             $64$       & 0.836E-4  &2.57  & 0.840E-4  &2.57 &0.843E-4 &2.58      &0.846E-4  &2.58  & 0.847E-4  &2.53  \\
             $128$      &0.141E-4  &2.54 &0.141E-4  &2.54&0.141E-4  &2.54 &0.142E-4  &2.48 &0.147E-4  &1.01  \\
             $256$     &0.242E-5 &2.52 &0.242E-5 &2.52 &0.242E-5 &2.47
&0.253E-5 &0.74 &0.730E-5 &-0.98 \\
             $512$      & 0.420E-6  &2.51  & 0.421E-6  &2.43  &0.438E-6 &0.63    &0.152E-5  &-0.81 & 0.144E-4  &-0.91 \\
             $1024$    & 0.737E-7 &--   & 0.781E-7  &--  &0.283E-6 &--             &0.266E-5  &-- & 0.271E-4 &--   \\

\cline{1-11}
\end{tabular*}}
\label{table:2}
\end{table}
\begin{table}[H]
\caption{$\Vert L_{k}u-u_N\Vert_{NIPG}$ for $k=3$ on Bakhvalov-type mesh}
\footnotesize
\centering
\resizebox{110mm}{25mm}{
\setlength\tabcolsep{4pt}
\begin{tabular*}{\textwidth}{@{\extracolsep{\fill}} c cccccccccc}
\cline{1-11}
    &\multicolumn{10}{c}{$\varepsilon$ }\\
\cline{1-11}
            \multirow{2}{*}{ $N$ }   &\multicolumn{2}{c}{$10^{-5}$} &\multicolumn{2}{c}{$10^{-6}$}  &\multicolumn{2}{c}{$10^{-7}$}   
&\multicolumn{2}{c}{$10^{-8}$} &\multicolumn{2}{c}{$10^{-9}$} \\

\cline{2-11}&$e_{N}$&$r_{N}$&$e_{N}$&$r_{N}$&$e_{N}$&$r_{N}$&$e_{N}$&$r_{N}$&$e_{N}$&$r_{N}$\\
\cline{1-11}

             $8$       &0.370E-2  &3.96 &0.382E-2  &3.96 &0.391E-2  &3.96 &0.397E-2  &3.96 &0.402E-2  &3.96 \\
             $16$       &0.237E-3 &3.97 &0.246E-3 &3.97  &0.252E-3 &3.96 &0.256E-3 &3.96  &0.259E-3 &3.98 \\
             $32$       &0.152E-4  &3.98 &0.157E-4  &3.97  &0.161E-4  &3.97  &0.164E-4  &3.89  &0.164E-4  &2.09 \\
             $64$       & 0.963E-6  &3.98 & 0.100E-5  &3.98  & 0.103E-5  &3.23 & 0.113E-5  &0.68  & 0.386E-5  &-0.68 \\
             $128$      &0.609E-7  &3.90 &0.637E-7  &1.92  &0.109E-6  &-0.54 &0.701E-6  &-1.12 &0.617E-5  &-1.20 \\
             $256$     &0.409E-8 &0.32 &0.169E-7 &-0.88 &0.159E-6 &-0.99 &0.152E-5 &-0.83 &0.141E-4 &-0.93 \\
             $512$      & 0.328E-8  &-1.13 & 0.310E-7  &-1.00 & 0.316E-6  &-0.94 &0.271E-5  &-1.13  & 0.270E-4 &-1.25  \\
             $1024$    & 0.717E-8 &--   & 0.619E-7  &--  & 0.609E-6  &-- & 0.594E-5 &--  &0.642E-4  &-- \\

\cline{1-11}
\end{tabular*}}
\label{table:3}
\end{table}

In addition, we present some numerical results when $\varepsilon\ge CN^{-1}$. For this purpose, we consider $\varepsilon= 10^{-1}, \cdots, 10^{-4}, k = 1, 2, 3$ and $N =8, \cdots, 1024$. And on a Bakhvalov-type mesh \eqref{eq:Bakhvalov-type mesh-Roos}, set $\sigma = k+1$, $\alpha=2$, the following tables can be obtained.
\vspace{-0.4cm}
\begin{table}[H]
\caption{$\Vert L_{k}u-u_N\Vert_{NIPG}$ for $k=1$}
\centering
\footnotesize
\resizebox{110mm}{25mm}{
\setlength\tabcolsep{4pt}
\begin{tabular*}{\textwidth}{@{\extracolsep{\fill}} c cccccccccc}
\cline{1-9}
    &\multicolumn{8}{c}{$\varepsilon$ }\\
\cline{1-9}
            \multirow{2}{*}{ $N$ }   &\multicolumn{2}{c}{$10^{-1}$} &\multicolumn{2}{c}{$10^{-2}$}  &\multicolumn{2}{c}{$10^{-3}$}   
&\multicolumn{2}{c}{$10^{-4}$}\\
\cline{2-9}&$e_{N}$&$r_{N}$&$e_{N}$&$r_{N}$&$e_{N}$&$r_{N}$&$e_{N}$&$r_{N}$\\
\cline{1-9}
             $8$      &0.632E-1  &1.91 &0.653E-1  &1.95 &0.672E-1  &1.96 &0.686E-1 &1.96 \\
             $16$       &0.168E-1  &1.93  & 0.169E-1  &1.97  &0.173E-1  &1.98 &0.176E-1  &1.98   \\
             $32$       &0.439E-2  &1.94  & 0.431E-2  & 1.97 & 0.437E-2  &1.99 & 0.446E-2  &1.99 \\
             $64$     &0.115E-2 &1.93  & 0.110E-2  &1.96& 0.110E-2  &2.00  & 0.112E-2  &2.00  \\
             $128$      &0.300E-3  &1.91 &0.282E-3  &1.95 &0.275E-3  &2.00 &0.280E-3  &2.00 \\
             $256$     &0.800E-4 &1.84 &0.730E-4 &1.95 &0.690E-4 &1.99 &0.698E-4 &2.01 \\
             $512$    &0.223E-4  &1.74  &0.189E-4  &1.96  &0.174E-4  &1.97  & 0.174E-4  &2.01 \\
             $1024$    &0.668E-5 &--   & 0.488E-5  &--  & 0.442E-5  &-- & 0.433E-5 &--   \\

\cline{1-9}
\end{tabular*}}
\label{table:3}
\end{table}
\begin{table}[H]
\caption{$\Vert L_{k}u-u_N\Vert_{NIPG}$ for $k=2$}
\footnotesize
\centering
\resizebox{110mm}{25mm}{
\setlength\tabcolsep{4pt}
\begin{tabular*}{\textwidth}{@{\extracolsep{\fill}} c cccccccc}
\cline{1-9}
    &\multicolumn{8}{c}{$\varepsilon$ }\\
\cline{1-9}
            \multirow{2}{*}{ $N$ }   &\multicolumn{2}{c}{$10^{-1}$} &\multicolumn{2}{c}{$10^{-2}$}  &\multicolumn{2}{c}{$10^{-3}$}   
&\multicolumn{2}{c}{$10^{-4}$}\\

\cline{2-9}&$e_{N}$&$r_{N}$&$e_{N}$&$r_{N}$&$e_{N}$&$r_{N}$&$e_{N}$&$r_{N}$\\
\cline{1-9}
             $8$       &0.159E-1  &2.61& 0.188E-1  &2.67 & 0.201E-1  &2.69 &0.209E-1  &2.70\\
             $16$       &0.261E-2  &2.57  &0.296E-2  &2.63 &0.311E-2  &2.65 &0.320E-2 &2.66  \\
             $32$       & 0.441E-3  &2.53&  0.479E-3  & 2.58 &0.496E-3  &2.60
& 0.506E-3  &2.61 \\
             $64$       & 0.763E-4  &2.50  & 0.800E-4  &2.55 &0.818E-4 &  2.56 &0.829E-4  &2.57 \\
             $128$      &0.135E-4  &2.48 &0.137E-4  &2.52&0.139E-4  &2.53 &0.140E-4  &2.54  \\
             $256$     &0.242E-5 &2.45&0.239E-5 &2.51 &0.240E-5 & 2.52
&0.241E-5 &2.52 \\
             $512$      & 0.443E-6  &2.40 & 0.420E-6  &2.50  & 0.418E-6 & 2.51    &0.420E-6  &2.51\\
             $1024$    &0.837E-7 &--   & 0.744E-7  &--  &0.735E-7 &--             &0.736E-7&-- \\

\cline{1-9}
\end{tabular*}}
\label{table:4}
\end{table}
\begin{table}[H]
\caption{$\Vert L_{k}u-u_N\Vert_{NIPG}$ for $k=3$}
\footnotesize
\centering
\resizebox{110mm}{25mm}{
\setlength\tabcolsep{4pt}
\begin{tabular*}{\textwidth}{@{\extracolsep{\fill}} c cccccccc}
\cline{1-9}
    &\multicolumn{8}{c}{$\varepsilon$ }\\
\cline{1-9}
            \multirow{2}{*}{ $N$ }   &\multicolumn{2}{c}{$10^{-1}$} &\multicolumn{2}{c}{$10^{-2}$}  &\multicolumn{2}{c}{$10^{-3}$}   
&\multicolumn{2}{c}{$10^{-4}$}\\

\cline{2-9}&$e_{N}$&$r_{N}$&$e_{N}$&$r_{N}$&$e_{N}$&$r_{N}$&$e_{N}$&$r_{N}$\\
\cline{1-9}
             $8$       &0.193E-2  &3.92&0.275E-2  &3.96 &0.322E-2  &3.97 & 0.352E-2  &3.97\\
             $16$       &0.128E-3  &3.93  &0.176E-3  &3.96 &0.205E-3  &3.98 & 0.225E-3&3.85  \\
             $32$       &0.839E-5  &3.92&0.113E-4  &3.97 &0.130E-4  & 3.99
&0.156E-4 &4.11\\
             $64$       & 0.553E-6  &3.89  & 0.721E-6  & 3.97 &0.818E-6 &3.99 &0.904E-6  &3.99 \\
             $128$      & 0.372E-7  &3.84 &0.462E-7  &3.95 &0.515E-7  & 3.99 &0.568E-7  &3.99  \\
             $256$     &0.260E-8 & 3.17&0.298E-8 &2.40 &0.324E-8 &3.00
& 0.357E-8 &2.60 \\
             $512$      & 0.289E-9  &-2.99 &0.564E-9  &-2.88  &0.405E-9 &-2.48 &0.589E-9 &-2.18\\
             $1024$    &0.229E-8 &--   &0.415E-8  &--  &0.225E-8 &--             & 0.266E-8&-- \\

\cline{1-9}
\end{tabular*}}
\label{table:5}
\end{table}
From Table (\ref{table:1}--\ref{table:5}), we find that with the increase of $k$ and mesh parameter $N$ or the decrease of perturbation parameter $\varepsilon$, the numerical results might be unstable. This is because the changes in the above conditions may increase the condition number of the linear system, thus increasing the difficulty of solving the linear system. Therefore, for the application of high-order numerical methods, a new iterative solver should be developed to solve these ill conditioned linear systems.

%

\begin{thebibliography}{14}

\bibitem{Adj1Kia2:2005-S}
S.~Adjerid and A.~Klauser.
\newblock Superconvergence of discontinuous finite element solutions for
  transient convection-diffusion problems.
\newblock {\em J. Sci. Comput.}, 22/23:5--24, 2005.

\bibitem{Bak1:1969-motified}
N.~S. Bakhvalov.
\newblock The optimization of methods of solving boundary value problems with a
  boundary layer.
\newblock {\em USSR Computational Mathematics and Mathematical Physics},
  9(4):139--166, 1969.

\bibitem{Gov1Moh2:2022-motified}
L.~Govindarao and J.~Mohapatra.
\newblock A numerical scheme to solve mixed parabolic-elliptic problem
  involving singular perturbation.
\newblock {\em Int. J. Comput. Math.}, 99(10):2069--2090, 2022.

\bibitem{Che1Shu2:2008-S}
Y.~Cheng and C.~Shu.
\newblock Superconvergence and time evolution of discontinuous {G}alerkin
  finite element solutions.
\newblock {\em J. Comput. Phys.}, 227(22):9612--9627, 2008.

\bibitem{Di1Dan2Ern3-2012:M}
D.~Di~Pietro and A.~Ern.
\newblock {\em Mathematical Aspects of Discontinuous {G}alerkin Methods},
  volume~69 of {\em Math\'{e}matiques \& Applications (Berlin) [Mathematics \&
  Applications]}.
\newblock Springer, Heidelberg, 2012.

\bibitem{Styn1Styn2:2018-Convection-diffusion}
M.~Stynes and D.~Stynes.
\newblock {\em Convection-diffusion problems}, volume 196 of {\em Graduate
  Studies in Mathematics}.
\newblock American Mathematical Society, Providence, RI; Atlantic Association
  for Research in the Mathematical Sciences (AARMS), Halifax, NS, 2018.

\bibitem{Moh1Nat2:2010-motified}
J.~Mohapatra and S.~Natesan.
\newblock The parameter-robust numerical method based on defect-correction
  technique for singularly perturbed delay differential equations with layer
  behavior.
\newblock {\em Int. J. Comput. Methods}, 7(4):573--594, 2010.

\bibitem{Moh1Red2:2015-E}
J.~Mohapatra and N.~R.~Reddy.
\newblock Exponentially fitted finite difference scheme for singularly
  perturbed two point boundary value problems.
\newblock {\em Int. J. Appl. Comput. Math.}, 1(2):267--278, 2015.

\bibitem{Zha1Lv2:2021-H}
J.~Zhang and Y.~Lv.
\newblock High-order finite element method on a {B}akhvalov-type mesh for a
  singularly perturbed convection-diffusion problem with two parameters.
\newblock {\em Appl. Math. Comput.}, 397:Paper No. 125953, 10, 2021.

\bibitem{Zha1Lv2:2022-S}
J.~Zhang and Y.~Lv.
\newblock Supercloseness of finite element method on a {B}akhvalov-type mesh
  for a singularly perturbed problem with two parameters.
\newblock {\em Appl. Numer. Math.}, 171:329--352, 2022.

\bibitem{Sah1Moh2:2019-P}
S.~R. Sahu and J.~Mohapatra.
\newblock Parameter uniform numerical methods for singularly perturbed delay
  differential equation involving two small parameters.
\newblock {\em Int. J. Appl. Comput. Math.}, 5(5):Paper No. 129, 19, 2019.

\bibitem{Fal1Heg2Mil3:2000-R}
P.~A. Farrell, A.~F. Hegarty, J.~J.~H. Miller, E.~O'Riordan, and G.~I.
  Shishkin.
\newblock {\em Robust Computational Techniques for Boundary Layers}, volume~16
  of {\em Applied Mathematics (Boca Raton)}.
\newblock Chapman \& Hall/CRC, Boca Raton, FL, 2000.

\bibitem{Cia1:2002-motified}
P.~G. Ciarlet.
\newblock {\em The Finite Element Method for Elliptic Problems}.
\newblock SIAM, 2002.

\bibitem{Roo1Sty2Tob3:2008-R}
H.-G. Roos, M.~Stynes, and L.~Tobiska.
\newblock {\em Robust Numerical Methods for Singularly Perturbed Differential
  Equations}, volume~24 of {\em Springer Series in Computational Mathematics}.
\newblock Springer-Verlag, Berlin, second edition, 2008.

\bibitem{Hou1Sch2:2002-D}
P.~Houston, C.~Schwab, and E.~S\"{u}li.
\newblock Discontinuous {$hp$}-finite element methods for
  advection-diffusion-reaction problems.
\newblock {\em SIAM J. Numer. Anal.}, 39(6):2133--2163, 2002.

\bibitem{Kad1Gup2:2010-A}
Mohan~K. Kadalbajoo and Vikas Gupta.
\newblock A brief survey on numerical methods for solving singularly perturbed
  problems.
\newblock {\em Appl. Math. Comput.}, 217(8):3641--3716, 2010.

\bibitem{Lin1Sty2:2001-N}
T.~Lin{\ss} and M.~Stynes.
\newblock Numerical methods on {S}hishkin meshes for linear
  convection-diffusion problems.
\newblock {\em Comput. Methods Appl. Mech. Engrg.}, 190(28):3527--3542, 2001.

\bibitem{Mad1Sty2:2003-motified}
N.~Madden and M.~Stynes.
\newblock A uniformly convergent numerical method for a coupled system of two
  singularly perturbed linear reaction-diffusion problems.
\newblock {\em IMA J. Numer. Anal.}, 23(4):627--644, 2003.

\bibitem{Zar1Roo2:2005-I}
H.-G. Roos and H.~Zarin.
\newblock The discontinuous {G}alerkin finite element method for singularly
  perturbed problems.
\newblock 35:246--267, 2003.

\bibitem{Shi1:1990-G}
G.~I. Shishkin.
\newblock Grid approximation of singularly perturbed elliptic and parabolic
  equations. second doctorial thesis, keldysh institute, moscow, 1990.
\newblock {\em Russian. Section}, 19.

\bibitem{Roo1Zar2:2007-motified}
H.~Zarin.
\newblock On discontinuous {G}alerkin finite element method for singularly
  perturbed delay differential equations.
\newblock {\em Appl. Math. Lett.}, 38:27--32, 2014.

\bibitem{Sty1Tob2:2003-mptified}
J.~Zhang and X.~Liu.
\newblock Supercloseness of the {SDFEM} on {S}hishkin triangular meshes for
  problems with exponential layers.
\newblock {\em Adv. Comput. Math.}, 43(4):759--775, 2017.

\bibitem{Zha1Liu2:2020-O}
J.~Zhang and X.~Liu.
\newblock Optimal order of uniform convergence for finite element method on
  {B}akhvalov-type meshes.
\newblock {\em J. Sci. Comput.}, 85(1):Paper No. 2, 14, 2020.

\bibitem{Zhu1Tan2Yin3:2015-H}
P.~Zhu, Y.~Yang, and Y.~Yin.
\newblock Higher order uniformly convergent {NIPG} methods for 1-d singularly
  perturbed problems of convection-diffusion type.
\newblock {\em Appl. Math. Model.}, 39(22):6806--6816, 2015.

\end{thebibliography}

\end{document}